\newtheorem{theorem}{Theorem}[section]
\newtheorem{thm}[theorem]{Theorem}
\newtheorem{pro}[theorem]{Proposition}
\newtheorem{lemma}[theorem]{Lemma}
\newtheorem{defi}[theorem]{Definition}
\newtheorem{example}[theorem]{Example}
\numberwithin{equation}{section}
\def\m{\medskip}
\def\cal{\mathcal }
\def\R{\mathbb R}
\def\N{\mathbb N}
\def\Q{\mathbb Q}
\def\mathscr{\mathcal }
\def\CJ{\mathcal J}
\def\m{\mathbf m}
\def\1{\mathbf 1}
\def\bj{\mathbf j}
\newcommand{\blambda}{{\boldsymbol{\lambda}}}
\newcommand{\balpha}{{\boldsymbol{\alpha}}}
\newcommand{\brho}{{\boldsymbol{\rho}}}
\newcommand{\btau}{{\boldsymbol{\tau}}}
\newcommand{\bm}{{\mathbf{m}}}
\newcommand{\bi}{{\mathbf{i}}}
\newcommand{\D}{{\mathcal D}}
\begin{document}

\title[Frobenius problem and Lipschitz equivalence]{Higher dimensional Frobenius problem and Lipschitz equivalence of Cantor sets}


\author{Hui Rao} \address{Department of Mathematics, Hua Zhong Normal University, Wuhan 430072,
China} \email{ hrao@@mail.ccnu.edu.cn}

\author{Yuan Zhang$\dagger$} \address{Department of Mathematics, Hua Zhong Normal University, Wuhan 430072,
China} \email{ }
\date{\today}
\thanks {The work is supported by CNSF No. 11171128 and NSFC No. 11431007.}

\thanks{
 {\indent\bf Key words and phrases:}\ Lipschitz equivalence, self-similar set, higher dimensional Frobenius problem, matchable condition}

\thanks{$\dagger$ The correspondence author.}

\begin{abstract}  The higher dimensional Frobenius problem was introduced
 by a preceding paper [Fan, Rao and Zhang, Higher dimensional Frobenius problem:
maximal saturated cones, growth function and rigidity, Preprint 2014].

In this paper, we investigate the Lipschitz equivalence of dust-like self-similar sets in $\R^d$.
For any self-similar set, we associate with it a higher dimensional Frobenius problem, 
and we show that the directional growth function of the associate higher dimensional Frobenius problem is a Lipschitz invariant.

As an application, we solve the Lipschitz equivalence problem when two dust-like self-similar sets $E$ and $F$ have
  coplanar ratios, by showing that they are Lipschitz equivalent if and only if  the contraction vector of  the $p$-th iteration
 of  $E$ is a permutation of that
 of the $q$-th iteration of  $F$ for some $p, q\geq 1$. This partially answers a question raised by Falconer and Marsh 
  [On the Lipschitz equivalence of
Cantor sets, \emph{Mathematika,} \textbf{39} (1992), 223--233].
\end{abstract}

\maketitle
\section{introduction}
Let $ E,F$ be subsets of $\R^d $. We say that $ E$ and $F $ are \emph{Lipschitz equivalent},
write $E\sim F$,
 if there exists a bijection $ f: E\rightarrow F$
and a constant $ C>0$ such that
\begin{equation}
 C^{-1}\mid x-y \mid \leq  \mid f(x)-f(y)\mid \leq  C\mid x-y \mid
\end{equation}
for all $ x,y \in E.$

An area of interest in the study of self-similar sets is the
Lipschitz equivalence property. With Lipschitz equivalence many
important properties of a self-similar set are preserved.
The earliest works on this area are  Cooper and Pignataro \cite{CP} (1988),
Falconer and Marsh \cite{FaMa92} (1992),   and
  David and Semmes \cite{DS} (1997). A survey on recent progress can be found in \cite{RRW13}.

There are two
different types of problems in this area.
 The first type problem, raised by Falconer and Marsh \cite{FaMa92},
 assumes that  two self-similar sets $E$ and $F$ have nice topological property, precisely,
they are dust-like, and asks how the Lipschitz equivalence relates to the contraction ratios of $E$ and $F$.
The second type problem, initialled by David and Semmes \cite{DS}, assumes that $E$ and $F$ have the same contraction
ratios, and asks how the geometrical placements of the branches affect the Lipschitz equivalence.

The first progress on the second type problem was made by  Rao, Ruan and Xi \cite{RRX06}, which  solved
the so-called $\{1,3,5\}-\{1,4,5\}$ problem posed in \cite{DS}. After that, there are many
generalizations and further progresses, for example,  \cite{XiRu07, RWX12, DengGT1} on one dimensional case, \cite{XiXi10, Ro10,XiXi12}
on higher dimensional case,  \cite{DengGT2, Lau13} on self-similar sets that are not totally discrete.

The first type problem is more tricky, and there is no progress until recent works of
Rao, Ruan and Wang \cite{RRW12} (2012) and Xiong and  Xi \cite{XiXi13} (2013).
The work \cite{RRW12} introduced a matchable condition, and showed that
  two dust-like self-similar sets must satisfy a  matchable condition  if
they are Lipschitz equivalent; as applications, the authors solved the problem if both self-similar sets have full rank or both of them
are two-branch self-similar sets.  Xiong and  Xi \cite{XiXi13}  studied the problem
when  $E$ and $F$ have rank $1$. They showed that if the Hausdorff dimension is fixed,
the number of different Lipschitz equivalence class equals the class number of the field generated by the ratios.

For related works
on Lipschitz equivalence of other fractals, see \cite{FaMa89} on quasi-circles, \cite{Xi04} on self-conformal sets,
\cite{MS08, Llo09} on bi-Lipschitz embedding of self-similar sets,
\cite{RRY08} on general Cantor sets, \cite{LLM} on Bedford-McMullen carpets.


Recall that  a self-similar set is the attractor of an
iterated  function system (IFS). Let $\{\phi_j\}_{j=1}^{m}$
 be an IFS on $\mathbb{R}^d ~$,  where each $ \phi_j$ is a contractive similarity with
contraction ratio $ 0< \rho_j<1$.
The attractor of the IFS is the unique nonempty compact set $E$ satisfying
$E=\bigcup_{j=1}^m \phi_j(E)$.
We say that the attractor $E$ is \emph{dust-like}, or alternatively,
the IFS $\{\phi_j\}$ satisfies the \emph{strong separation condition},
if the sets $ \{\phi_j( E)\}$ are disjoint. It is well known that if
$E$ is dust-like, then the Hausdorff dimension $ \delta=\dim_{H}(E)$ of $E$ satisfies
$\sum_{j=1}^m \rho_j^\delta=1.$ See \cite{Hut81, Fal-book2}.

For any $ \rho_1,\dots,\rho_m \in (0,1)$ with $ \sum_{j=1}^m \rho_j^d<1$,
we will call $ \boldsymbol{\rho}=(\rho_{1},\cdot\cdot\cdot,\rho_{m})$ a \emph{contraction vector},
and use the notation ${\cal D}(\boldsymbol{\rho})={\cal D}( \rho_1, \dots,\rho_m) $
to denote the set of all dust-like self-similar sets with contraction ratios
$ \rho_j, \ j=1,\dots,m $. All sets in $ {\cal D}(\boldsymbol{\rho})$
have the same Hausdorff dimension, which we denote by $\dim_{H}{\cal D}(\boldsymbol{\rho})$.
It is well-known that the elements in ${\cal D}(\rho_1,\dots,\rho_m)$   are Lipschitz equivalent to each other;
hence we denote  ${\cal D}(\boldsymbol{\rho})\sim {\cal D}(\boldsymbol{\tau})$,
if $E \sim F$ for some (and thus for all) $E \in {\cal D}(\boldsymbol{\rho})$ and $F \in {\cal D}(\boldsymbol{\tau}).$
See \cite{FaMa92, RRW12}.

In the  study of Lipschitz equivalence, a main idea is to construct Lipschitz invariants.
Falconer and Marsh \cite{FaMa92} introduced  Lipschitz invariants related to the algebraic properties of the contraction ratios.
Let $\langle  \rho_1,\dots,\rho_m  \rangle$, or $\langle\brho\rangle$,  denote the subgroup of
 $( \mathbb{R}^{+} ,\times)~$ generated by $ \rho_1,\dots,\rho_m$;
  let $sgp(a_1,\dots, a_m)$ denote the multiplicative semi-group generated by $a_1,\dots, a_m$;
let $\mathbb{Q}(a_1,\dots,a_m)$ denote the subfield of $\mathbb{R}$ generated by $\mathbb{Q}$
and $a_1,\dots,a_m$.

The \emph{rank} of $\langle\brho\rangle$,
 which we denote by rank $\langle \boldsymbol{\rho}\rangle$, is defined  to be the cardinality of
 the basis of $\langle  \rho_{1},\cdot\cdot\cdot,\rho_{m}  \rangle~$.

\begin{pro}\label{FM-theo} (\cite{FaMa92})
Let ${\cal D}(\rho_1,\dots, \rho_m )\sim {\cal D}(\tau_1,\dots, \tau_n) $,
and  $\delta$ be their common Hausdorff dimension.  Then
\begin{enumerate}

\item[$(i)$] There exist positive integers $p,~q$ such that
$$
sgp(\rho_1^p,\dots,\rho_m^p)\subseteq sgp(\tau_1,\dots,\tau_n),\quad
 sgp(\tau_1^q,\dots,\tau_n^q)\subseteq sgp(\rho_1,\dots,\rho_m);
$$

 \item[$(ii)$] $\mathbb{ Q}(\rho_1^\delta,\dots,\rho_m^\delta)=\mathbb{ Q}(\tau_1^\delta,\dots,\tau_n^\delta).$
 \end{enumerate}
\end{pro}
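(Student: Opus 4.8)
The plan is to exploit the bi-Lipschitz map $f: E \to F$ together with the measure-theoretic and coding structure of dust-like self-similar sets. Fix $E \in \mathcal{D}(\rho_1,\dots,\rho_m)$ and $F \in \mathcal{D}(\tau_1,\dots,\tau_n)$ with $f: E\to F$ a bi-Lipschitz bijection with constant $C$, and write $\delta$ for the common Hausdorff dimension (the dimensions agree because $f$ bi-Lipschitz preserves Hausdorff dimension, and both satisfy $\sum \rho_j^\delta = \sum \tau_k^\delta = 1$). Both $E$ and $F$ carry natural self-similar (Hausdorff) probability measures $\mu_E, \mu_F$ obtained from the Bernoulli measures on the coding spaces $\Sigma_m = \{1,\dots,m\}^{\mathbb N}$ and $\Sigma_n$ with weights $(\rho_j^\delta)$ and $(\tau_k^\delta)$ respectively; these are exactly the $\delta$-dimensional Hausdorff measures restricted to $E$, $F$ (up to normalization), and since $f$ is bi-Lipschitz, $f$ pushes $\mu_E$ to a measure comparable to $\mu_F$.

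For part (i), the key step is a cylinder-matching argument. For a word $I = i_1\cdots i_k$ in the alphabet of $E$, let $E_I = \phi_{i_1}\circ\cdots\circ\phi_{i_k}(E)$, a cylinder of diameter $\rho_I |E|$ where $\rho_I = \rho_{i_1}\cdots\rho_{i_k}$; similarly $F_J$ for words $J$ in the alphabet of $F$. The image $f(E_I)$ is a set of diameter comparable to $\rho_I$, and by strong separation one can find a word $J$ (of length roughly $\log \rho_I$) such that $F_J \subseteq f(E_I)$ with $\tau_J$ comparable to $\rho_I$, the comparability constants being uniform in $I$. Iterating (composing such inclusions along a nested sequence of cylinders) and using the pigeonhole principle on the finitely many "types" of cylinders, one produces, for a suitable power $p$, an identity showing every element of $sgp(\rho_1^p,\dots,\rho_m^p)$ lies in $sgp(\tau_1,\dots,\tau_n)$; symmetrically, using $f^{-1}$, one gets $sgp(\tau_1^q,\dots,\tau_n^q)\subseteq sgp(\rho_1,\dots,\rho_m)$. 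The main obstacle here is controlling the geometry: $f(E_I)$ need not be a cylinder of $F$ or even connected, so one must argue that it nonetheless contains a full cylinder $F_J$ of the right size --- this uses the strong separation condition to get a definite gap, the bi-Lipschitz bounds to transfer the gap, and a covering/counting argument to pin down the scale.

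For part (ii), the equality of fields $\mathbb{Q}(\rho_1^\delta,\dots,\rho_m^\delta) = \mathbb{Q}(\tau_1^\delta,\dots,\tau_n^\delta)$, the idea is to extract the weights $\rho_j^\delta$ from the local scaling behavior of the measure $\mu_E$ at a.e.\ point, which is a bi-Lipschitz invariant. Concretely, for $\mu_E$-a.e.\ $x$ with coding $(i_1 i_2 \cdots)$, the ratios $\mu_E(E_{i_1\cdots i_k i})/\mu_E(E_{i_1\cdots i_k}) = \rho_i^\delta$ are visible as limits of ratios of measures of small balls, and transporting via $f$ (which distorts measures of balls only by bounded factors, and distorts radii only by bounded factors) shows these same numbers $\rho_i^\delta$ must be expressible through the corresponding quantities $\tau_k^\delta$ on the $F$ side --- more precisely, using the fact from part (i) that semigroups are commensurable, one deduces each $\rho_j^{p\delta}$ lies in the semigroup generated by the $\tau_k^\delta$, hence in $\mathbb{Q}(\tau_1^\delta,\dots,\tau_n^\delta)$, and since $\sum \rho_j^\delta = 1$ forces $\rho_j^\delta$ to be algebraic over that field with the minimal polynomial constraints, one upgrades $\rho_j^{p\delta}$ to $\rho_j^\delta$ and gets one inclusion; the reverse is symmetric. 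I expect the delicate point to be the passage from the semigroup statement (which only controls powers) to the clean field equality without spurious radicals, which is handled by the defining relation $\sum_j \rho_j^\delta = 1$ together with a Galois-theoretic argument on the roots of the associated Moran equation.
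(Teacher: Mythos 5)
This proposition is not proved in the paper; it is quoted from Falconer and Marsh \cite{FaMa92}, so there is no ``paper's own proof'' to compare against. Evaluating your sketch against the argument in \cite{FaMa92}:

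For part (i), your outline is in the right spirit: by strong separation, $f(E_I)$ is a relatively clopen subset of $F$ separated from its complement by a gap comparable to $\mathrm{diam}\,E_I$, hence it is a finite disjoint union of $F$-cylinders at a comparable scale, and symmetrically for $f^{-1}$. But the crucial step --- ``a covering/counting argument to pin down the scale'' plus ``pigeonhole on finitely many types'' --- is left as a gesture. The difficulty is precisely to identify a \emph{finite} set of data on which to pigeonhole: the ratios $\tau_J/\rho_I$ land in a bounded interval of a possibly dense subgroup of $\R^{+}$, and boundedness alone gives no pigeonhole. Falconer and Marsh extract the needed finiteness by following a single fixed point of $\phi_i$ and tracking the sequence of $F$-cylinders containing its image; without spelling out a construction of that kind, the leap from ``comparable ratio'' to the exact semigroup inclusion $sgp(\rho_1^p,\dots,\rho_m^p)\subseteq sgp(\tau_1,\dots,\tau_n)$ is a genuine gap.

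For part (ii), your route is different from Falconer--Marsh's and, as stated, does not work. You propose to deduce $\rho_j^{p\delta}\in\Q(\tau_1^\delta,\dots,\tau_n^\delta)=:K$ from part (i), and then ``upgrade'' $\rho_j^{p\delta}$ to $\rho_j^\delta$ using $\sum_j\rho_j^\delta=1$ plus a Galois argument. But $a^p\in K$ with $a>0$ does not imply $a\in K$, and one real linear relation $\sum_j\rho_j^\delta=1$ among the $m$ quantities $\rho_j^\delta$ is far from enough to force each of them individually into $K$; there is no minimal-polynomial mechanism that makes $p$-th roots rational over $K$ here. What Falconer--Marsh actually use is the \emph{exact} measure-preservation property of bi-Lipschitz maps between dust-like self-similar sets (the observation the present paper attributes, in a refined form, to Xi and Ruan \cite{XiRu08}): after normalizing the natural $\delta$-dimensional measures to probability measures, $f$ pushes $\mu_E$ forward \emph{exactly} to $\mu_F$, not merely up to a multiplicative constant. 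Combined with the fact that $f(E_I)$ is a finite disjoint union of $F$-cylinders $F_{J_1},\dots,F_{J_r}$, this gives directly
$$
\rho_I^\delta=\mu_E(E_I)=\mu_F(f(E_I))=\sum_{s=1}^{r}\tau_{J_s}^\delta\in\Q(\tau_1^\delta,\dots,\tau_n^\delta)
$$
for every word $I$, with no passage through $p$-th powers and no root extraction. Your sketch reaches for this fact (``$f$ pushes $\mu_E$ to a measure comparable to $\mu_F$'') but only in the weak, up-to-constants form, which is not strong enough; the exact equality is the key lemma you are missing, and it is what lets one bypass the Galois difficulty entirely.
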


Property (ii)  shows that $\Q(\rho_1^\delta,\dots, \rho_m^\delta)$ is a Lipschitz invariant.
In Section 2, we show that item (i) can also be made into a Lipschitz invariant. Define
$$ V_{\brho}^+=\Q^+ \log \rho_1+\cdots+\Q^+ \log \rho_m.$$

\begin{theorem}\label{thm-V+}  Let $\brho$ and $\btau$ be two contraction vectors.
Property (i) in Theorem \ref{FM-theo} holds if and only if
$V_{\brho}^+=V_{\btau}^+.$
\end{theorem}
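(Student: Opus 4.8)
The plan is to translate the semigroup containment statements in Property (i) into linear-algebraic statements about the cone $V_{\brho}^+$ by taking logarithms. Set $x_j = \log\rho_j < 0$ and $y_k = \log\tau_k < 0$; then $V_{\brho}^+ = \sum_j \Q^+ x_j$ and $V_{\btau}^+ = \sum_k \Q^+ y_k$ are rational convex cones in $\R$ (more precisely in the $\Q$-vector space spanned by all the $x_j$ and $y_k$). The key observation is that $\mathrm{sgp}(\rho_1^p,\dots,\rho_m^p) \subseteq \mathrm{sgp}(\tau_1,\dots,\tau_n)$ says precisely that every element of $p\,\Z^{\geq 0}\text{-span}\{x_1,\dots,x_m\}$ lies in $\Z^{\geq 0}\text{-span}\{y_1,\dots,y_n\}$, and in particular each $p x_j$ is a nonnegative \emph{integer} combination of the $y_k$'s; dividing by $p$, each $x_j$ is a nonnegative \emph{rational} combination of the $y_k$'s, i.e. $x_j \in V_{\btau}^+$. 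Hence $V_{\brho}^+ \subseteq V_{\btau}^+$. Symmetrically the second containment in (i) gives $V_{\btau}^+ \subseteq V_{\brho}^+$, so $V_{\brho}^+ = V_{\btau}^+$. This is the easy direction.

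For the converse, assume $V_{\brho}^+ = V_{\btau}^+$. Then each $x_j$ lies in $\sum_k \Q^+ y_k$, so there exist nonnegative rationals $c_{jk}$ with $x_j = \sum_k c_{jk} y_k$. Choosing a common denominator $p$ for all the $c_{jk}$ (over all $j,k$), we get $p x_j = \sum_k (p c_{jk}) y_k$ with $p c_{jk} \in \Z^{\geq 0}$, which is exactly the statement $\rho_j^p \in \mathrm{sgp}(\tau_1,\dots,\tau_n)$ for each $j$; since a semigroup is closed under products, this yields $\mathrm{sgp}(\rho_1^p,\dots,\rho_m^p) \subseteq \mathrm{sgp}(\tau_1,\dots,\tau_n)$. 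The symmetric argument starting from $y_k \in \sum_j \Q^+ x_j$ produces the integer $q$ and the second containment.

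I expect the only genuine subtlety — the "main obstacle", though it is mild — to be bookkeeping about the passage between $\Z^{\geq 0}$-combinations and $\Q^{\geq 0}$-combinations, and making sure that "clearing denominators" is legitimate here: multiplying a membership relation $x_j = \sum_k c_{jk} y_k$ by $p$ is fine because $V_{\btau}^+$ is a cone (closed under multiplication by positive rationals) and because the $y_k$ are fixed generators, so no issue of choosing different representations arises. One should also note at the outset that $V_{\brho}^+$ depends only on $\brho$ as a \emph{set} of ratios up to the cone it generates, and that $\rho_j \in (0,1)$ guarantees $x_j \neq 0$, so the cones are genuinely cones in a nonzero vector space; none of this affects the argument but it is worth stating cleanly. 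Everything else is a direct unwinding of definitions, so the proof should be short.
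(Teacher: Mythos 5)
Your proof is correct and takes essentially the same route as the paper: both directions proceed by taking logarithms, translating the semigroup containments in (i) into nonnegative rational linear relations among the $\log\rho_j$ and $\log\tau_k$, and passing back by clearing denominators. The only cosmetic difference is which of the two containments in (i) you use to derive which inclusion of cones (the paper extracts $V_{\btau}^+\subseteq V_{\brho}^+$ from $sgp(\tau_1^q,\dots,\tau_n^q)\subseteq sgp(\rho_1,\dots,\rho_m)$, whereas you extract $V_{\brho}^+\subseteq V_{\btau}^+$ from the other containment); by symmetry this is immaterial.
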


Xi and
Ruan \cite{XiRu08} observed that  bi-Lipschitz maps between two dust-like self-similar
sets $E$ and $F$ enjoy a certain measure-preserving property.
Using this property,
 Rao, Ruan and Wang \cite{RRW12}
 constructed a family of relations between symbolic spaces related to two dust-like  self-similar sets $E$ and $F$,
and showed that these relations must satisfy a \emph{matchable condition}  if
they are Lipschitz equivalent. As applications, they show that

\begin{pro}\label{pro-RRW} (\cite{RRW12})
Let $(\rho_1,\dots,\rho_m )$ and $(\tau_1,\dots,\tau_n)$ be two contraction vectors.

(i)   If $\text{rank}\langle\brho\rangle=m$ and $\text{rank} \langle \btau\rangle=n$, \textit{i.e.}, both vectors have full rank, then   ${\mathcal D}(\brho)\sim {\mathcal D}(\btau)$  if and only if $\btau$ is a permutation of $\brho$.

(ii) If $m=n=2$, then    ${\mathcal D}(\brho)\sim {\mathcal D}(\btau)$ if and only if
either $\btau$ is a permutation of $\brho$, or there exists a real number $0<\lambda<1$ such that
    $
       \{\rho_1,\rho_2\}=\{\lambda^5,\lambda\},\quad  \{\tau_1,\tau_2\}= \{ \lambda^3,\lambda^2\}.
    $
\end{pro}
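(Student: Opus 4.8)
The plan is to treat each of (i), (ii) by proving its two implications separately. For the ``if'' directions I use the elementary observation that if $\{\phi_j\}_{j=1}^m$ is a dust-like IFS with attractor $E$ and $\mathcal A\subset\{1,\dots,m\}^*$ is a maximal antichain of its coding tree, then $E=\bigsqcup_{I\in\mathcal A}\phi_I(E)$, so $E$ also realises the IFS $(\phi_I)_{I\in\mathcal A}$ and hence $E\in{\mathcal D}\big((\rho_I)_{I\in\mathcal A}\big)$; since all members of a fixed ${\mathcal D}(\cdot)$ are mutually Lipschitz equivalent, it follows that if $\brho$ and $\btau$ admit a common antichain refinement then ${\mathcal D}(\brho)\sim{\mathcal D}(\btau)$. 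A permutation of $\brho$ yields literally the same family, which settles ``if'' in (i) and one case of ``if'' in (ii). For the other case of (ii), suppose $\{\rho_1,\rho_2\}=\{\lambda^5,\lambda\}$ and $\{\tau_1,\tau_2\}=\{\lambda^3,\lambda^2\}$; from the factorisation $t^5+t-1=(t^2-t+1)(t^3+t^2-1)$ the real root $y$ of $t^3+t^2-1$ also satisfies $y^5+y=1$, so with $\delta$ defined by $\lambda^\delta=y$ both families have Hausdorff dimension $\delta$. Subdividing the larger branch of $(\lambda^5,\lambda)$ by the maximal antichain $\{1,21,22\}$ replaces the ratio $\lambda$ by $\{\lambda^6,\lambda^7,\lambda^3\}$, giving the ratio vector $(\lambda^3,\lambda^5,\lambda^6,\lambda^7)$; subdividing the larger branch of $(\lambda^3,\lambda^2)$ by the same antichain replaces $\lambda^2$ by $\{\lambda^5,\lambda^7,\lambda^6\}$, again giving $(\lambda^3,\lambda^5,\lambda^6,\lambda^7)$. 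Hence a member of each of the two families lies in ${\mathcal D}(\lambda^3,\lambda^5,\lambda^6,\lambda^7)$, so ${\mathcal D}(\lambda^5,\lambda)\sim{\mathcal D}(\lambda^3,\lambda^2)$.

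For the ``only if'' direction of (i), assume ${\mathcal D}(\brho)\sim{\mathcal D}(\btau)$ with $\brho,\btau$ of full rank $m,n$. By Proposition~\ref{FM-theo}(i) the group $\langle\btau\rangle$ contains the $p$-th power subgroup of $\langle\brho\rangle$ and vice versa; passing to power subgroups preserves rank, so $m=n$ and the two groups are commensurable. Now bring in the matchable condition of \cite{RRW12}: a Lipschitz-equivalent pair must carry a matching between the coding trees of $E$ and $F$ under which paired cylinders $I,J$ have $\mathcal H^\delta$-comparable measure, $\rho_I^\delta\asymp\tau_J^\delta$. Under full rank this rigidifies --- because $\langle\rho_1^\delta,\dots,\rho_m^\delta\rangle$ is free of rank $m$ with the $\rho_i^\delta$ a basis, and similarly for the $\tau_j^\delta$ --- so the matching may be taken measure-exact, $\rho_I^\delta=\tau_J^\delta$, on cylinders generating everything, forcing in particular $\langle\brho\rangle=\langle\btau\rangle$. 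Writing $\tau_j=\prod_i\rho_i^{a_{ij}}$ with $A=(a_{ij})\in GL_m(\Z)$ and running the exact matches through the change of basis, one sees that $A$ and $A^{-1}$ both have nonnegative entries; a nonnegative integral matrix with nonnegative integral inverse is a permutation matrix, whence $\btau$ is a permutation of $\brho$.

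For the ``only if'' direction of (ii), assume ${\mathcal D}(\brho)\sim{\mathcal D}(\btau)$ with $m=n=2$. By Proposition~\ref{FM-theo} the groups $\langle\brho\rangle,\langle\btau\rangle$ have equal rank; if this rank is $2$, both vectors have full rank and part (i) gives a permutation. If it is $1$, let $\lambda\in(0,1)$ generate $\langle\brho,\btau\rangle$, so $\brho=(\lambda^{p_1},\lambda^{p_2})$, $\btau=(\lambda^{q_1},\lambda^{q_2})$ with $\gcd(p_1,p_2,q_1,q_2)=1$, and put $y=\lambda^\delta\in(0,1)$, so $y^{p_1}+y^{p_2}=1=y^{q_1}+y^{q_2}$. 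If $\{p_1,p_2\}=\{q_1,q_2\}$ we are done. Otherwise, after swapping $\brho\leftrightarrow\btau$ and reordering if needed we may assume $p_1<q_1$; comparing the two sums forces $q_2<p_2$, so $p_1<q_1\le q_2<p_2$ and $y$ is a common root of the trinomials $t^{p_1}+t^{p_2}-1$ and $t^{q_1}+t^{q_2}-1$, which therefore share the minimal polynomial of $y$ over $\Q$. The matchable condition, read off against the numerical semigroups $\langle p_1,p_2\rangle$ and $\langle q_1,q_2\rangle$ of cylinder exponents, forces $\brho$ and $\btau$ to have a common antichain refinement; translating this into a size/divisibility restriction on $(p_1,p_2,q_1,q_2)$ and combining it with the common-root condition reduces the problem to finitely many candidate exponent patterns. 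A direct check --- again using $t^5+t-1=(t^2-t+1)(t^3+t^2-1)$ and the irreducibility of $t^3+t^2-1$ --- rules out all of them except $\{p_1,p_2\}=\{5,1\}$, $\{q_1,q_2\}=\{3,2\}$; re-absorbing the discarded common factor into $\lambda$ gives exactly the exceptional solution in the statement.

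The main obstacle in both ``only if'' directions is turning the matchable condition into sharp arithmetic: in (i), upgrading ``comparable-measure'' matches to exact measure equalities on a generating set of cylinders so the $GL_m(\Z)$ argument applies; in (ii), extracting an effective bound on $p_i,q_j$ and then carrying out the finite case analysis that isolates the unique sporadic pair $\{5,1\}/\{3,2\}$. I expect the constructive ``if'' directions and the reduction steps (rank bookkeeping from Proposition~\ref{FM-theo}, and the reduction of (ii) to the rank-one case via part (i)) to be routine.
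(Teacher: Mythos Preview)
This proposition is quoted from \cite{RRW12}; the present paper does not give a self-contained proof of it, and part~(ii) is never touched. What the paper does is recover part~(i) by a different route, as the special case $a_j=b_j=1$ of Example~\ref{exam-full}: a full-rank contraction vector is automatically coplanar (with respect to a pseudo-basis aligned with the extreme rays of ${\mathbf C}_X$, the $X_j$ become positive multiples of $e_j$), so Theorem~\ref{main-2} yields that $\brho^p$ is a permutation of $\btau^q$; the short counting/convexity argument in Section~\ref{sec-final} then forces $p=q$ and hence $\brho$ a permutation of $\btau$. All the hard work with the matchable condition is done once, in the proof of Theorem~\ref{main-1}, and packaged into the invariance of the directional growth function.

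Your plan for (i) instead retraces the original \cite{RRW12} argument: matchable relations, an upgrade from comparable measure to exact measure on a generating family of cylinders, and then the observation that a matrix in $GL_m(\Z)$ with nonnegative inverse is a permutation. That is a legitimate strategy, but the step you yourself flag---``the matching may be taken measure-exact''---is precisely the substantive lemma of \cite{RRW12}, not a formality; you cannot assert it without reproducing their graph-theoretic argument. If your goal is a proof \emph{within this paper's framework}, the intended path is Theorem~\ref{main-1} $+$ Theorem~\ref{main-2} $+$ the Jensen argument of Section~\ref{sec-final}, which bypasses that lemma entirely.

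For (ii), your ``if'' construction via the common antichain refinement to $(\lambda^3,\lambda^5,\lambda^6,\lambda^7)$ is correct and clean. Your ``only if'' outline---reduce to rank~$1$ via (i), then combine the common-trinomial-root constraint $y^{p_1}+y^{p_2}=y^{q_1}+y^{q_2}=1$ with the matchable condition to force a finite list of exponent patterns---has the right shape, but the key finiteness reduction is not carried out, and this paper offers no help: its coplanar rigidity (Theorem~\ref{rigidity}) does not apply in rank~$1$, and the two-branch case genuinely requires the delicate analysis of \cite{RRW12}.
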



As the existing results show, the Lipschitz equivalence of self-similar sets
are tightly related to the multiplicative groups generated by contraction ratios. Instead of working with the multiplicative subgroup $\langle \brho\rangle$
it is more practical for us to work with the additive groups of
$(\mathbb{Z}^s,+),$ which is associated to $\langle \brho\rangle$ ($s$ being the rank of $\langle \brho\rangle$).
For this purpose, \cite{RRW12} introduced  the notion of pseudo-basis.

We call positive numbers $\omega_1,\dots,\omega_s \in \R$ a \emph{pseudo-basis} of $\langle \boldsymbol{\rho}\rangle$, or of $\brho$ in short,
 if
 $\langle \boldsymbol{\rho}\rangle \subset \langle  \omega_1,\dots,\omega_{s} \rangle$
 and $\text{rank~} \langle \boldsymbol{\rho}\rangle=s$.
Clearly, $V_{\brho}^+=V_{\btau}^+$ implies that $\brho$ and $\btau$
  have a \emph{common} pseudo-basis.

Actually, we shall fix a (common) pseudo-basis $\blambda=(\lambda_1,\dots, \lambda_s)$, then
the group $\langle \blambda\rangle$ is isomorphic to $(\mathbb{Z}^s,+),$
and $\langle \brho\rangle$ is isomorphic to a subgroup of  $(\mathbb{Z}^s,+).$
This leads to the following notations.

For $x=(x_1,\dots,x_s)\in \mathbb{Z}^s,$
we define $\text{exp}_{\blambda}: \mathbb{Z}^s\rightarrow \langle \blambda\rangle$ by
\begin{equation}
\boldsymbol{\lambda}^x=\prod_{i=1}^s{\lambda_i}^{x_i}.
\end{equation}
Also, we define the inverse function $\log_{\boldsymbol{\lambda}}: \langle \blambda\rangle\rightarrow \mathbb{Z}^s$ as
\begin{equation}
 \log_{\boldsymbol{\lambda}}z=x, \text{ where } \ z=\boldsymbol{\lambda}^x \in \langle \blambda\rangle.
\end{equation}

For $X_1,\dots, X_m\in \R^s$,  we shall use the notation $X=\{X_1,\dots, X_m\}$ and denote
$$
{\mathbf C}_X= \R^+X_1+\cdots+\R^+X_m.
$$
 to be the cone generated by $X_1,\dots, X_m$, where $\R^+$ denotes the set of non-negative real numbers. We note that $X_j$'s are not required to be distinct.

In the preceding  paper  Fan, Rao and Zhang \cite{FRZ14}, we introduced the higher dimensional Frobenius problem and investigated its various properties.
Especially, a \emph{directional growth function} $\gamma_{_X}$ is defined (see Section 3).
For a contraction vector $(\rho_1,\dots, \rho_m)$ with  a pseudo-basis $(\lambda_1,\dots, \lambda_s)$, we can associate with it a higher dimensional Frobenius problem as follows. Set
\begin{equation}
X_i=\log_{\boldsymbol{\lambda}}\rho_i, \quad i=1, \dots, m.
\end{equation}
Put $\balpha=-(\log \lambda_1,\dots,\log \lambda_s ),$ then $X_j\cdot \balpha =-\log \rho_j>0$ for all $j$,
where $\cdot$ denotes the inner product in $\R^s$.
 Therefore,  the vectors $X_i$    are located in an (open)  half-space of $\R^s$ and
 a higher dimensional Frobenius problem can be defined by the defining data
 $X=\{X_1,\dots, X_m\}$,   called
the \emph{associate higher dimensional Frobenius problem}.


Using the matchable condition in \cite{RRW12}, we show that  the directional growth function $\gamma$ is a Lipschitz invariant,
which is the main result of this paper.

\begin{thm} \label{main-1}  Let  $(\rho_1,\dots,\rho_m )$ and $(\tau_1,\dots,\tau_n)$ be two contraction vectors
 such that ${\mathcal D}(\brho)\sim {\mathcal D}(\btau)$,  let $\blambda$ be a  common pseudo-basis of
$\brho$ and  $\btau$. Denote
$$
X_j=\log_\blambda \rho_j(1\leq j\leq m), \quad Y_k=\log_\blambda \tau_k(1\leq k\leq n).
$$
Then

(i)  ${\mathbf C}_X={\mathbf C}_Y$, where $X=\{X_1,\dots, X_m\}$ and $Y=\{Y_1,\dots, Y_n\}$;

(ii) $ \gamma_{_X}(\theta)= \gamma_{_Y}(\theta)$
 for all unit vector $\theta\in {\mathbf C}_X.$
\end{thm}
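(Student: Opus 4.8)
The plan is to extract both conclusions from the matchable condition of \cite{RRW12} together with the measure‑preserving property of Xi–Ruan \cite{XiRu08}, translated into the additive language of $\mathbb{Z}^s$. Fix $E\in\mathcal D(\brho)$, $F\in\mathcal D(\btau)$ and a bi‑Lipschitz bijection $f:E\to F$. Passing to iterates of the IFS's (which changes neither $\mathbf C_X$, $\mathbf C_Y$ nor the growth functions in an essential way — replacing $\brho$ by the $p$‑th iteration replaces $X$ by the $p$‑fold sumset, whose cone is still $\mathbf C_X$), one gets from the matchable condition a correspondence between cylinders of $E$ and $F$ of comparable diameter. Concretely, for each word $u$ in the symbolic space of $E$ there is a finite set of words $v$ in that of $F$ with $\rho_u\asymp\tau_v$, and summing the measures $\rho_u^\delta=\sum \tau_v^\delta$ along such a matching (this is where \cite{XiRu08} enters) gives the algebraic backbone. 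Taking $\log_\blambda$, a cylinder $\phi_u$ of $E$ corresponds under $f$ to a bounded‑overlap family of cylinders $\phi_v$ of $F$ whose exponent vectors $\log_\blambda\tau_v=\sum Y_{k_i}$ all lie within bounded distance of $\log_\blambda\rho_u=\sum X_{j_i}$.

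For part (i): every vector of the form $\sum_{i} X_{j_i}$ (a lattice point in the semigroup generated by $X$) is, by the above, within bounded $\ell^\infty$‑distance of some point of the semigroup generated by $Y$, and symmetrically. Scaling: for a unit vector $\theta\in\mathbf C_X$ and large $t$, the point $t\theta$ is within bounded distance of a semigroup point of $X$ (this is essentially the statement that $\mathbf C_X$ is the asymptotic cone of that semigroup — a fact about the higher‑dimensional Frobenius problem from \cite{FRZ14}), hence within bounded distance of a semigroup point of $Y$; dividing by $t$ and letting $t\to\infty$ shows $\theta\in\mathbf C_Y$. The reverse inclusion is symmetric, giving $\mathbf C_X=\mathbf C_Y$.

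For part (ii): recall that $\gamma_{_X}(\theta)$ counts (an exponential growth rate of) the number of representations of lattice points near the ray $\R^+\theta$ as sums from $X$, suitably normalized — this is the directional growth function of \cite{FRZ14}. The matchable condition says that the family of $E$‑cylinders of a given ``size level'' is partitioned into matched blocks, each block of bounded cardinality, corresponding to a similar partition on the $F$‑side; moreover the measure identity $\sum\rho_u^\delta=\sum\tau_v^\delta$ within each block forces the two sides to have comparable \emph{counts} of cylinders at each scale in each direction (the individual ratios being bounded, the number of terms on each side is comparable). Passing to $\log_\blambda$ and to the asymptotics that define $\gamma$, the counting functions $N_X(t,\theta)$ and $N_Y(t,\theta)$ that underlie $\gamma_{_X}$ and $\gamma_{_Y}$ are each within a bounded multiplicative factor of the other for all large $t$; since $\gamma$ is defined as a limit (growth rate) of $N(t,\theta)$, the bounded factor disappears in the limit and $\gamma_{_X}(\theta)=\gamma_{_Y}(\theta)$ for every unit $\theta\in\mathbf C_X=\mathbf C_Y$.

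The main obstacle I expect is making precise the bridge between the matchable condition (a statement about \emph{existence} of matchings of symbolic cylinders, with only qualitative ``bounded cardinality/bounded ratio'' control) and the \emph{quantitative} directional counting that the growth function $\gamma$ measures: one must show that a bounded‑to‑bounded matching of cylinder families, respecting the $\delta$‑dimensional measure, transfers not just the cone but the full directional growth asymptotics. This requires carefully tracking how the ``size levels'' (annuli $\{t-O(1)\le X\cdot\balpha\le t\}$ in $\mathbb{Z}^s$) on the $E$‑side map to comparable annuli on the $F$‑side under $\log_\blambda$, and invoking the regularity of $\gamma$ established in \cite{FRZ14} (e.g. that $N(t,\theta)$ grows polynomially with exponent $\gamma(\theta)$, so that bounded multiplicative perturbations are invisible to $\gamma$). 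The cone statement (i) is comparatively soft; the real content is in (ii).
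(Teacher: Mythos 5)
Your strategy is the right one at a high level, but the proposal diverges from the paper in part (i) and leaves the genuinely hard step in part (ii) unfilled.

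For part (i), your argument via the matchable condition and an asymptotic-cone limit is more elaborate than necessary. The paper's proof is a two-line algebraic computation: Proposition \ref{FM-theo}(i) gives $\rho_j^p=\tau_1^{k_1}\cdots\tau_n^{k_n}$ with nonnegative integers $k_i$, so applying $\log_\blambda$ yields $X_j=\sum_i(k_i/p)Y_i\in{\mathbf C}_Y$ directly, and symmetry finishes it. There is no need to pass through cut sets, bounded-distance approximations, or any limiting argument. Your route is not wrong, but it replaces a clean exact identity with an unnecessary asymptotic detour, and it silently assumes that the matchable constant $M_0$ is uniform in the threshold $t$, which is true (Theorem \ref{thm-match}) but should be cited.

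For part (ii), the central claim in your sketch --- that the counting functions underlying $\gamma_{_X}$ and $\gamma_{_Y}$ are ``within a bounded multiplicative factor'' of each other --- is precisely the step that does not follow from what you have written, and the bound obtained in the paper is in fact \emph{polynomial} in $k$, not bounded. The issue is this: the matchable relation ${\mathcal R}_k$ pairs \emph{paths} in the cut sets ${\mathcal W}_E(e^{-k})$ and ${\mathcal W}_F(e^{-k})$ whose exponent vectors lie within $h$-distance $M_0$; it does not pair paths that end at the same lattice point. The directional growth function $\gamma(\theta)$, however, is built from the multiplicity $\m(z)$ at a single lattice point $z$ near the ray $\R^+\theta$. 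To pass from ``the matched $F$-paths land within a bounded disc around $w_{k,E}$'' to ``their number is controlled by $\m_F(w_{k,F})$'' you must sum $\m_F$ over all lattice points in that disc and then compare each such value to $\m_F(w_{k,F})$. This comparison is governed by Theorem \ref{Q(x)}, which says that $\m(z)/\m(z')\le Q(|z|)$ for a polynomial $Q$ whenever $|z-z'|$ is bounded --- a statement about the multiplicity $\m$, not about $\gamma$. Your appeal to ``the regularity of $\gamma$'' misattributes the needed ingredient and misstates its strength (polynomial, not constant). The polynomial factor is harmless only because $\gamma$ is extracted as $\lim_k\log\m(w_k)/|O_k|$ and $\log P(k)/|O_k|\to 0$, and this cancellation is exactly the payoff of the paper's three-step comparison $\#I_k\lesssim\#I_k^*\lesssim\#J_k^*\lesssim\#G\lesssim\tilde P(k)\,\m_F(w_{k,F})$. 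Finally, the measure identity $\rho_u^\delta=\sum\tau_v^\delta$ from \cite{XiRu08} is used \emph{inside} the proof of the matchable condition in \cite{RRW12}, not again afterwards; once one cites Theorem \ref{thm-match} as a black box, re-invoking the measure identity to get ``comparable counts'' is redundant --- the bounded degree of ${\mathcal R}_k$ already gives that. So the proposal is honest about its gap, but that gap is the entire technical content of (ii), and the specific tool that closes it (Theorem \ref{Q(x)}) needs to be applied to $\m$, not to $\gamma$, and yields a polynomial rather than a constant.
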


Comparing to the matchable condition, the function  $\gamma$ is much easier to handle.
Especially, explicit formulas of $\gamma$ are obtained in the so-called coplanar case in \cite{FRZ14}.

\begin{defi}{\rm
We say a contraction ratio $\brho$ (of rank $s$) is \emph{coplanar}, if there exists a pseudo-basis $\blambda$ of $\langle \brho \rangle$,
such that $\{\log_{\blambda} \rho_j\}$ locate in a common hyperplane of ${\mathbb R}^s$.}
\end{defi}

The coplanar property is independent of the choice of the pseudo-basis,
for if $\blambda'$ is another pseudo-basis of $\langle \brho \rangle$, then there is an invertible matrix $L$
such that $\log_{\blambda'} \rho_j=L \log_{\blambda} \rho_j$ for all $j$.

 We define the \emph{$k$-th iteration} of $\brho$, denoted by
$\brho^k$, to be the vector
$$
(\rho_{\bi})_{\bi\in\{1,\dots,m\}^k},
$$
where $\rho_{i_1\dots i_k}=\prod_{j=1}^k\rho_{i_j}$, and $\bi$ is ordered by the lexicographical order. Fan, Rao and Zhang \cite{FRZ14} proved that,
 in the coplanar case, the directional growth function completely determines the defining data, see Theorem 1.6 in \cite{FRZ14} (or
 Theorem \ref{rigidity} in Section 3).
As a consequence, we have

\begin{theorem}\label{main-2} If $\brho$ and $\btau$ are coplanar.  Then $\D(\brho)\sim \D(\btau)$  if and only if,
 there exist $p$ and $q$ such that the $p$-th iteration of $\brho$ is a permutation of the $q$-th  iteration of $\btau$.
\end{theorem}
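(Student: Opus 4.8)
The plan is to deduce this from the two main results cited in the excerpt: Theorem \ref{main-1} (the directional growth function $\gamma$, together with the cone, is a Lipschitz invariant) and the rigidity result Theorem \ref{rigidity} of \cite{FRZ14} (in the coplanar case $\gamma_{_X}$ together with ${\mathbf C}_X$ determines the defining data $X$ as a multiset). The one extra ingredient needed is a dictionary between \emph{iterating the contraction vector} and \emph{scaling/taking finite sums in the Frobenius data}: if $\brho$ has associate data $X=\{X_1,\dots,X_m\}$ with respect to a pseudo-basis $\blambda$, then the $p$-th iteration $\brho^p$ has associate data
$$
X^{(p)}=\{\,X_{i_1}+\cdots+X_{i_p} : (i_1,\dots,i_p)\in\{1,\dots,m\}^p\,\},
$$
since $\log_\blambda\rho_{i_1\cdots i_p}=\sum_j\log_\blambda\rho_{i_j}$ (note the same $\blambda$ remains a pseudo-basis of $\langle\brho^p\rangle=\langle\brho\rangle$, so coplanarity is preserved). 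The first thing I would check is the easy behaviour of the invariants under iteration: ${\mathbf C}_{X^{(p)}}={\mathbf C}_X$ (the sums $X_{i_1}+\cdots+X_{i_p}$ all lie in ${\mathbf C}_X$, and each $pX_i$ is such a sum, so the generated cones coincide), and $\gamma_{_{X^{(p)}}}=\gamma_{_X}$ as functions on that cone — the latter should be immediate from the definition of the directional growth function, because the semigroup $\{X_{i_1}+\cdots+X_{i_k}\}$ generated by $X^{(p)}$ is a cofinal sub-semigroup of the one generated by $X$, so the asymptotic counting along a ray is unchanged. These are the "sufficiency" facts.

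For the \textbf{if} direction: suppose $\brho^p$ is a permutation of $\btau^q$. Fix a common pseudo-basis $\blambda$ of $\langle\brho\rangle=\langle\btau\rangle$ (such a common pseudo-basis exists because the two semigroups generate the same rank and, by the hypothesis $\brho^p\sim$-permutation-of$-\btau^q$ as vectors of numbers, the same group). Then the multisets $X^{(p)}$ and $Y^{(q)}$ coincide; hence ${\mathbf C}_{X}={\mathbf C}_{X^{(p)}}={\mathbf C}_{Y^{(q)}}={\mathbf C}_Y$ and $\gamma_{_X}=\gamma_{_{X^{(p)}}}=\gamma_{_{Y^{(q)}}}=\gamma_{_Y}$. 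Now I would invoke the (already established, in \cite{RRW12, FaMa92}) fact that a contraction vector and any of its iterations generate Lipschitz-equivalent classes, ${\mathcal D}(\brho)\sim{\mathcal D}(\brho^p)$ and ${\mathcal D}(\btau)\sim{\mathcal D}(\btau^q)$, together with ${\mathcal D}(\brho^p)={\mathcal D}(\btau^q)$ (identical contraction vectors up to permutation), to conclude ${\mathcal D}(\brho)\sim{\mathcal D}(\btau)$. This direction does not even need the rigidity theorem, only the iteration dictionary.

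For the \textbf{only if} direction: assume ${\mathcal D}(\brho)\sim{\mathcal D}(\btau)$. By Theorem \ref{main-1}, after fixing a common pseudo-basis $\blambda$ we get ${\mathbf C}_X={\mathbf C}_Y$ and $\gamma_{_X}(\theta)=\gamma_{_Y}(\theta)$ for all unit $\theta\in{\mathbf C}_X$. Since $\brho$ and $\btau$ are coplanar, so are their data $X$ and $Y$ (both sit in hyperplanes of $\R^s$ — here I should note that ${\mathbf C}_X={\mathbf C}_Y$ forces these to be the \emph{same} hyperplane through $0$, or rather that, after the coplanarity normalization used in Theorem \ref{rigidity}, the $X_j\cdot\balpha$ and $Y_k\cdot\balpha$ can be compared). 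Apply the rigidity theorem: in the coplanar case $({\mathbf C}_X,\gamma_{_X})$ determines $X$ up to the finite ambiguity described there — concretely, Theorem \ref{rigidity} of \cite{FRZ14} should say that two coplanar defining data with equal cone and equal directional growth function have a common iteration, i.e.\ there are $p,q$ with $X^{(p)}=Y^{(q)}$ as multisets. Translating back through the dictionary, $X^{(p)}=Y^{(q)}$ is exactly the statement that $\brho^p$ is a permutation of $\btau^q$.

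The \textbf{main obstacle} I anticipate is the precise form of the rigidity input. Theorem 1.6 of \cite{FRZ14} is quoted as "the directional growth function completely determines the defining data," but "completely determines" in the presence of iteration must mean "up to a common refinement/iteration," and one must check that the way that theorem is stated (presumably: $\gamma_{_X}=\gamma_{_Y}$ and ${\mathbf C}_X={\mathbf C}_Y$ imply $X^{(p)}=Y^{(q)}$ for suitable $p,q$) matches what is needed here, including the bookkeeping that the hyperplane/normalization conventions in \cite{FRZ14} are compatible with the pseudo-basis chosen via Theorem \ref{main-1}. A secondary technical point is verifying $\gamma_{_{X^{(p)}}}=\gamma_{_X}$ from the definition in Section 3; this is "obvious" but depends on how $\gamma$ is normalized (counting lattice-type points in a dilated region along a ray), and the constant factor $p$ in "$pX_i$ versus $X_i$" must wash out in the growth rate — I would want to confirm this is built into the definition rather than needing a separate lemma.
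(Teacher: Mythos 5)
Your proposal is correct and follows essentially the same route as the paper: the ``only if'' direction is Theorem \ref{main-1} followed by the rigidity Theorem \ref{rigidity} translated through the dictionary $\log_\blambda\rho_{\bi}=\kappa(\bi)$, and the ``if'' direction uses only the fact that an IFS and its iterations give the same attractor (so ${\mathcal D}(\brho)\sim{\mathcal D}(\brho^p)$), exactly as in the paper. The preliminary paragraph in your ``if'' direction about ${\mathbf C}_{X^{(p)}}={\mathbf C}_X$ and $\gamma_{_{X^{(p)}}}=\gamma_{_X}$ is superfluous (as you yourself note), and the worry about normalization conventions in Theorem \ref{rigidity} is unfounded --- its statement already gives exactly $X^{(p)}$ a permutation of $Y^{(q)}$.
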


As the following example shows, Proposition \ref{pro-RRW} (i), one of the main results in  \cite{RRW12},
 is a very special case of Theorem \ref{main-2}.

\begin{example}\label{exam-full} {\rm Let $a_j, k_j, b_j, \ell_j$ ($1\leq j\leq s$) be positive integers such that
$$
\brho=(\rho_1,\dots, \rho_m)=\left (\underbrace{\lambda_1^{k_1},\dots, \lambda_1^{k_1}}_{a_1},\dots,
\underbrace{\lambda_s^{k_s},\dots, \lambda_s^{k_s}}_{a_s} \right ),
$$
$$
\btau=(\tau_1,\dots, \tau_n)=\left (\underbrace{\lambda_1^{\ell_1},\dots, \lambda_1^{\ell_1}}_{b_1},\dots,
\underbrace{\lambda_s^{\ell_s},\dots, \lambda_s^{\ell_s}}_{b_s} \right ).
$$
Then ${\mathcal D}(\brho)\sim {\mathcal D}(\btau)$ if and only if $\brho$ is a permutation of $\btau$.
When all $a_j=b_j=1$, we obtain Proposition \ref{pro-RRW}(i).
(We leave the simple proof to Section \ref{sec-final}.)
}
\end{example}


The paper is organized as follows. In Section 2, we introduce vector spaces as  Lipschitz invariants; Theorem \ref{thm-V+} and Theorem \ref{main-1}(i) are proved there.
In Section 3, we recall the notations and results on higher dimensional Frobenius problem. Our main result, Theorem \ref{main-1}, is proved in Section 4. Theorem \ref{main-2} is proved in Section 5.

\section{\textbf{Lipschitz invariants}}

First,  we  show that vector spaces can serve  as   Lipschitz invariants. Let $(\rho_1,\dots, \rho_m)$ be a contraction vector, define
$$ V_{\brho}=\Q \log \rho_1+\cdots+\Q \log \rho_m.$$
Then $(V_{\brho},\mathbb{Q})$ is a vector space.

If  $\brho=(\rho_1,\dots,\rho_m)$ and $\btau=(\tau_1,\dots,\tau_n)$  possessing a common
pseudo-basis $\blambda=(\lambda_1,\dots, \lambda_s)$, then
$$ V_{\brho}=V_\btau=\Q \log \lambda_1+\cdots+\Q \log \lambda_s,$$
 and so   $(V_{\brho},\mathbb{Q})$ is a Lipschitz invariant.

Theorem \ref{thm-V+} asserts that $V_\brho^+=\Q^+ \log \rho_1+\cdots+\Q^+ \log \rho_m$ is also a Lipschitz invariant.

\medskip

\noindent \textbf{Proof of Theorem \ref{thm-V+}.}
Assume that Property (i) of Proposition \ref{FM-theo} holds.  Notice that $\tau_1,\dots,\tau_n$
belong to the semigroup generated by $\rho_1^{1/q},\dots,\rho_m^{1/q}.$ Let $a_{ji},1\leq j \leq n,1\leq i \leq m,$
be non-negative integers such that $\tau_j=\prod_{i=1}^m(\rho_i^{1/q})^{a_{ji}},$ then
$$
\log \tau_j=\sum_{i=1}^m\frac{a_{ji}}{q}\log \rho_i \in V^+_{\rho}.
$$
and so that $V^+_{\tau}\subset V^+_{\rho}.$ By symmetry,
we have $V^+_{\rho} \subset V^+_{\tau}.$

Next, we prove the other direction. Let $a_{ji}\in\mathbb{Q}^+,1\leq j \leq n,1\leq i \leq m,$ such that
$\log \tau_j=\sum_{i=1}^m a_{ji}\log \rho_i.$
Let $q>0$ be an integer such that $qa_{ji}\in \mathbb{Z}^+$ for all $i,j$.
Then $\tau_j^q\in sgp(\rho_1,\dots,\rho_m)$, hence $sgp(\tau_1^q,\dots,\tau_n^q)\subseteq sgp(\rho_1,\dots,\rho_m).$
By symmetry, there exists a positive integer $p$ such that $sgp(\rho_1^p,\dots,\rho_m^p)\subseteq sgp(\tau_1,\dots,\tau_n).$
$\Box$

\medskip

\medskip

\noindent \textbf{Proof of Theorem \ref{main-1} (i).}
By Proposition \ref{FM-theo} (i), there exist integers $p, k_1,\dots, k_n\geq 1$ such that
$\rho_1^p=\tau_1^{k_1}\cdots \tau_n^{k_n}$. Applying $\log_{\blambda}$ to both sides of the equation, we obtain
$$X_1=\frac{k_1}{p}Y_1+\cdots +\frac{k_n}{p}Y_n.
$$
Therefore $X_1\in {\mathbf C}_Y$. By the same reason $X_j\in {\mathbf C}_Y$ for all $j$
and so  ${\mathbf C}_X\subset {\mathbf C}_Y$. Finally, by symmetry, we obtain
${\mathbf C}_X= {\mathbf C}_Y$.
$\Box$

\bigskip

\section{\textbf{Higher dimensional Frobenius Problem}}
Let $a_1,\dots,a_m$ be positive integers, and assume they are coprime without loss of generality.
 Set
$$
{\mathcal J}=a_1\mathbb{N}+\dots+a_m\mathbb{N},
$$
where $\N=\{0,1,2,\dots\}$ denotes the set of natural numbers. Clearly ${\mathcal J}\subset \N,$ and
there exists a minimum integer $g=g(a_1,\dots,a_m)$ such that
$
(g+1+\mathbb{N})\subset {\mathcal J}.
$
Finding the value of  $g$ is the famous \textit{Frobenius problem}.
See for instance,   Ram\'irez-Alfons\'in \cite{Alf05}.

Fan, Rao and Zhang \cite{FRZ14} introduced the higher dimensional  Frobenius problem and investigated the basic properties.
 Let $X_1,\dots,X_m$   in $\mathbb{Z}^s$ be vectors locating in a half-space, that is, there is a non-zero vector
$\balpha\in {\mathbb R}^s$ such that
$\ X_j\cdot \balpha>0$  for all $j=1,\dots, m,$
and assume that $X_1,\dots, X_m$ span the space $\R^s$.
The concern is to
understand the
 structure of   the semi-group
\begin{equation}\label{equ9}
\CJ=X_1\mathbb{N}+\dots+X_m\mathbb{N}.
\end{equation}

%

To a higher dimensional Frobenius problem,  \cite{FRZ14} defines a  \emph{directional growth function},  which is useful for our purpose.
In the rest of this section, we review the definitions and results of \cite{FRZ14}.

\subsection{Multiplicity}
Let  $\Sigma_m^*:=\bigcup_{k=0}^{\infty}\{1,2,\dots,m \}^k$ be the set of words over the alphabet
$\{1,2,\dots,m \}$,
which can be also considered as a tree.
For any word $\mathbf{i}=i_1\dots i_n \in \Sigma_m^*,$ we define
\begin{equation}\label{kappa}
\kappa (\mathbf{i})=X_{i_1}+\cdots+X_{i_n}.
\end{equation}
We consider $\kappa: \Sigma_m^* \to \mathbb{Z}^s$ as the walk in $\mathbb{Z}^s$ guided by $X_1,\dots,X_m$ along with the tree $\Sigma_m^*$.
Elements in $\Sigma_m^*$ are also called {\em pathes} of the walk and $\kappa (\mathbf{i})$ is called the {\em visited position}
following the path $\mathbf{i} $.
Clearly, a point $z\in \mathbb{Z}^s$ is a visited position (of some path) if and only if
  $z\in {\mathcal J}$. We define
the \emph{multiplicity} of a point $z\in\mathcal J$ to be
\begin{equation}\label{equ10}
{\mathbf m}(z):=\# \{\omega\in \Sigma_m^*; \ \kappa(\omega)=z\},
\end{equation}
which measures how many times a position is visited.
For point $x\in {\mathbf C}_X$ but not in ${\mathcal J}$, instead of setting ${\mathbf m}(x)=0$,
  we define  ${\mathbf m}(x)$ to be the multiplicity of the point in $\CJ$ which
is nearest to $x$, that is,
\begin{equation}
{\mathbf m}(x):=\min \{{\mathbf m}(z); \ z\in {\mathcal J} \text{ and } |x-z|=d(x,{\mathcal J})\},
\end{equation}
where $|\cdot|$ denote the Euclidean norm and $d(x,{\mathcal J}):=\min\{|x-z|; ~z\in {\mathcal J}\}$.

The following theorem asserts that $\m(z)$ does not vary dramatically,  which plays an important role in \cite{FRZ14} as well as in the present paper.

\begin{thm}\label{Q(x)} (\cite{FRZ14})
Let $C_0$ be a positive integer.  There exists a polynomial $Q(x)$ with positive coefficients such that
$$
 \frac{1}{Q(|z|)}\leq\frac{{\mathbf m}(z)}{{\mathbf m}({z^{'}})}\leq Q(|z|)
$$
provided that $z,z'\in {\mathcal J}$ and $|z-z'| <C_{0}.$
\end{thm}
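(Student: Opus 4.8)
\textbf{Proof proposal for Theorem \ref{Q(x)}.}
The plan is to first reduce the claim to the case $z' = z + X_j$ for a single generator $X_j$, and then to control the ratio $\mathbf{m}(z+X_j)/\mathbf{m}(z)$ by a polynomial in $|z|$. For the reduction: if $|z - z'| < C_0$, then since $X_1,\dots,X_m$ span $\R^s$, there is a bounded-length word $\bi$ (length bounded by a constant depending only on $X$ and $C_0$, using that a fixed finite set of $\pm X_j$-combinations covers all lattice points of norm $< C_0$ — here one must be slightly careful because only $\N$-combinations are allowed, but one can write $z' - z = \kappa(\bi) - \kappa(\bj)$ with $\bi,\bj$ of bounded length) so that chaining a bounded number of single-generator steps connects $z$ to $z'$. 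A product of boundedly many polynomial factors is again a polynomial, so it suffices to treat one step.

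For the one-step estimate, the key observation is the obvious lower bound $\mathbf{m}(z + X_j) \ge \mathbf{m}(z)$: every path $\omega$ with $\kappa(\omega) = z$ extends to the path $\omega j$ with $\kappa(\omega j) = z + X_j$, and this map is injective. This already gives $\mathbf{m}(z+X_j)/\mathbf{m}(z) \ge 1$. The harder direction is the upper bound $\mathbf{m}(z + X_j) \le Q(|z|)\,\mathbf{m}(z)$. Here I would count paths reaching $z + X_j$ by their last letter: $\mathbf{m}(z + X_j) = \sum_{\ell=1}^{m} \#\{\omega' : \kappa(\omega') = z + X_j - X_\ell\}$, where $\omega'$ ranges over words whose last letter is $\ell$ — but this is just $\sum_\ell \mathbf{m}(z + X_j - X_\ell)$. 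The points $z + X_j - X_\ell$ all lie within a fixed bounded distance of $z$, so if the theorem were already known we could bound each term; to avoid circularity I would instead set up an induction on $|z|$ (or on the minimal path length to reach $z$), proving the statement for all pairs $z, z'$ with $|z| \le R$ before passing to $|z| \le R + 1$, and absorbing the finitely many ``boundary'' points near the apex of the cone into the constant term of $Q$.

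The main obstacle I anticipate is precisely making this induction airtight without circular reasoning: the recursion $\mathbf{m}(z + X_j) = \sum_\ell \mathbf{m}(z + X_j - X_\ell)$ expresses $\mathbf{m}$ at a point in terms of its values at nearby points that are \emph{not} uniformly closer to the apex, so a naive induction on $|z|$ does not immediately close. The fix is to induct on the graph distance in the walk (equivalently, on $\min\{|\omega| : \kappa(\omega) = z\}$, which by the half-space hypothesis $X_j \cdot \balpha > 0$ is comparable to $|z|$), and to observe that $\mathbf{m}(z)$ grows at most polynomially in this quantity while any single point has multiplicity at least $1$ once it is in $\CJ$; combining a polynomial upper bound on $\mathbf{m}$ near $z$ with the trivial lower bound $\mathbf{m}(z) \ge 1$ (valid for $z \in \CJ$ with $|z|$ large, by a pigeonhole/covering argument showing deep points of the cone are visited) yields the ratio bound. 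The extension of the estimate from $\CJ$ to all of $\mathbf{C}_X$ via the definition of $\mathbf{m}(x)$ as the multiplicity of a nearest lattice point in $\CJ$ is then routine, since replacing $x$ by a nearest point of $\CJ$ changes distances by a bounded amount and only enlarges the constant $C_0$.
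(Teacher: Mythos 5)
The paper does not prove Theorem \ref{Q(x)}; it is quoted from \cite{FRZ14}, so there is no in-paper argument to compare against, and your proposal must be assessed on its own terms. The reduction to single-generator steps $z' = z + X_j$ is sound: write $z' - z = \kappa(\bi) - \kappa(\bj)$ with $\bi,\bj$ of length bounded in terms of $C_0$ and $X$, chain from $z$ up to $z + \kappa(\bi) = z' + \kappa(\bj)$ and then back down to $z'$, noting that every intermediate point stays in $\CJ$ and remains within a bounded distance of $z$; a bounded product of polynomial one-step factors is again polynomial. The one-step lower bound $\mathbf{m}(z+X_j) \ge \mathbf{m}(z)$ via the injective extension $\omega \mapsto \omega j$ is also correct.

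The gap is in the one-step upper bound. Your proposed closing move rests on the assertion that ``$\mathbf{m}(z)$ grows at most polynomially'' in the minimal path length to $z$ (equivalently, in $|z|$), and this is false: $\mathbf{m}$ is typically of exponential size. With $X_1 = (1,0)$, $X_2 = (0,1)$ in $\Z^2$ one has $\mathbf{m}(a,b) = \binom{a+b}{a}$, which on the diagonal is $\binom{2n}{n}\sim 4^n/\sqrt{\pi n}$; with $X_1 = 1$, $X_2 = 2$ in $\Z$ one gets Fibonacci growth. So pairing a ``polynomial upper bound on $\mathbf{m}$ near $z$'' with the trivial bound $\mathbf{m}(z)\ge 1$ cannot produce a two-sided polynomial ratio: in these examples the ratio $\mathbf{m}(z+X_j)/\mathbf{m}(z)$ is controlled only because both quantities are comparably huge, not because either is polynomially small. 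The recursion $\mathbf{m}(z+X_j)=\sum_\ell \mathbf{m}(z+X_j-X_\ell)$ likewise fails to close the induction, for the very reason you anticipate: only the $\ell=j$ term equals $\mathbf{m}(z)$, while the other terms sit at points that are not uniformly closer to the apex in any monotone quantity, so bounding them by $\mathrm{poly}(|z|)\,\mathbf{m}(z)$ is exactly the statement being proved. What is missing is a genuine comparison mechanism at a single step --- for instance an approximate injection from paths reaching $z+X_j$ into pairs consisting of a path reaching $z$ together with a bounded correction datum taking only polynomially many values, valid even when the letter $j$ does not occur in the path --- and that is precisely where the real content of the theorem lies.
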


\subsection{Directional growth function}
The directional growth function  $\gamma(\theta)$ defined below
 describes the exponential increasing speed of the multiplicity along the direction $\theta$.

\begin{defi} For a unit vector $\theta\in {\mathbf C}_{X}$,
the  \emph{directional growth function} is defined as
\begin{equation}\label{gamma}
\gamma(\theta)=\underset{k \rightarrow  \infty}{\lim}\frac{\log \mathbf{m}(k\theta)}{k}
\end{equation}
as soon as the limit exists.
\end{defi}

 It is shown \cite{FRZ14} that the above limit always exists. Moreover,
according to Theorem \ref{Q(x)}, the limit in \eqref{gamma} still exists if $k$ tends to infinity  in $\R^+$ instead of in $\N$.

In general, it is difficult to obtain an explicit formula of $\gamma(\theta)$; nevertheless,
explicit formulas are obtained in a special case called coplanar case.

We say $X_1,\dots, X_m$ are \emph{coplanar}, if they locate at a same hyper-plane, \textit{i.e.},
there exists a vector $\eta \in \R^s$ such that
$$
\langle \eta,  X_j \rangle =1, \quad j=1,\dots, m.
$$
In this case, \cite{FRZ14} showed that
\begin{equation}\label{entropy}
\gamma(\theta) = {\langle \theta, \eta \rangle } \sup \left \{ h(p);~ p_1X_1+\cdots+p_mX_m=\frac{\theta}{\langle \theta, \eta\rangle} \right \},
\end{equation}
 where $p=(p_1,\dots, p_m)$ is a probability vector (we allow $p_j$ to take value $0$), and $h(p)$ is  the entropy of $p$ defined as
$
h(p)=-\sum_{j=1}^m p_j\log p_j.
$

%

\subsection{Rigidity results}
Given two collections of vectors
$X=\{X_1,\dots, X_m\}$ and
$Y=\{Y_1,\dots, Y_{n}\}$, if they define the same directional growth function, \textit{i.e.},
$$
{\mathbf C}_X={\mathbf C}_Y \text{ and } \gamma_X=\gamma_Y
$$
 what can we say about $X$ and $Y$?  Rao, Ruan and Wang \cite{RRW12} essentially obtained the following rigidity result.

 \begin{theorem} (\cite{RRW12})   Suppose    $X=\{X_1,\dots, X_s\}$ and
$Y=\{Y_1,\dots, Y_{s}\}$  are two collections of linearly independent vectors in ${\mathbb Z}^s$.
If they define the same directional growth function, then $X$ is a permutation of $Y$.
 \end{theorem}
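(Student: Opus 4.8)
The plan is to reduce everything to the explicit coplanar formula \eqref{entropy}. First note that $s$ linearly independent vectors are automatically coplanar: if $X_1,\dots,X_s$ form a basis of $\R^s$, there is a unique $\eta$ with $\langle\eta,X_j\rangle=1$ for all $j$ (solve $X^{\mathsf T}\eta=\1$), and likewise a unique $\eta'$ for $Y_1,\dots,Y_s$. Consequently ${\mathbf C}_X$ is a simplicial cone whose extremal rays are precisely $\R^+X_1,\dots,\R^+X_s$, and the same for ${\mathbf C}_Y$. So the hypothesis ${\mathbf C}_X={\mathbf C}_Y$ forces the two sets of rays to coincide, and after permuting the $Y_k$'s we may assume $Y_j=c_jX_j$ with $c_j>0$ for each $j$; it then remains to show $c_j=1$. (One may assume $s\ge 2$; the case $s=1$ is degenerate and excluded in the intended applications.)

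The next step is to convert $\gamma_X=\gamma_Y$ into a functional identity on the open simplex $\Delta^{\circ}=\{p:\ p_j>0,\ \sum_j p_j=1\}$. For $p\in\Delta^{\circ}$ put $\theta(p)=\big(\sum_j p_jX_j\big)/\big|\sum_j p_jX_j\big|$; then $\langle\theta(p),\eta\rangle=1/|\sum_j p_jX_j|$ and, since the $X_j$ are independent, $p$ is the unique probability vector with $\sum_j p_jX_j=\theta(p)/\langle\theta(p),\eta\rangle$, so \eqref{entropy} collapses to $\gamma_X(\theta(p))=h(p)/\big|\sum_j p_jX_j\big|$. Expressing the same direction through $Y$ and using $\langle X_j,\eta'\rangle=1/c_j=:d_j$, one finds the $Y$-barycentric vector of $\theta(p)$ is $q_j=p_jd_j/S$ with $S:=\sum_i p_id_i$, and $\sum_j q_jY_j=S^{-1}\sum_j p_jX_j$, whence $\gamma_Y(\theta(p))=Sh(q)/\big|\sum_j p_jX_j\big|$. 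Equating the two and expanding $Sh(q)$ yields, for every $p\in\Delta^{\circ}$,
$$\sum_{j=1}^s p_j(d_j-1)\log p_j=-\sum_{j=1}^s p_jd_j\log d_j+S\log S.$$

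Finally I would read off the $c_j$ from the behaviour near a vertex. Fix an index $k$ and a probability vector $(r_j)_{j\ne k}$ in the interior of its simplex, and set $p_k=1-\varepsilon$, $p_j=\varepsilon r_j$ for $j\ne k$, letting $\varepsilon\to0^+$. The right-hand side of the displayed identity is a smooth function of $p$ near $e_k$ (because $S\to d_k>0$, so $S\log S$ stays smooth), hence it is $O(\varepsilon)$; the left-hand side, after separating the $\log\varepsilon$ contribution, equals $\big(\sum_{j\ne k}r_j(d_j-1)\big)\,\varepsilon\log\varepsilon+O(\varepsilon)$. Since $\varepsilon\log\varepsilon$ is not $O(\varepsilon)$, we must have $\sum_{j\ne k}r_j(d_j-1)=0$; this holds on a nonempty open set of $(r_j)$'s, so $d_j=1$ for all $j\ne k$, and varying $k$ gives $d_j=1$, i.e.\ $c_j=1$, for every $j$. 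Hence $Y_j=X_j$ after the permutation, which is the assertion. The only step that is not pure bookkeeping with \eqref{entropy} is this last asymptotic argument — isolating the singular $\varepsilon\log\varepsilon$ term on the left and checking that nothing on the right can cancel it — and that is the point I expect to require the most care.
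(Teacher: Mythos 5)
Your proof is correct. One preliminary remark: the paper itself does not prove this statement; it is attributed to \cite{RRW12}, and the paper merely records it in the Frobenius-problem language ("essentially obtained"). So there is no in-paper proof to compare against, but we can compare your route with what \cite{RRW12} did.

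Your derivation is a genuinely different and, in the current framework, more streamlined argument than the one in \cite{RRW12}. That paper predates the entropy formula \eqref{entropy} from \cite{FRZ14}; its proof of the full-rank rigidity goes directly through the matchable condition (Theorem \ref{thm-match}) by a combinatorial word-counting argument. You instead treat $s$ linearly independent vectors as a special (automatically coplanar, in fact simplicial) case and exploit the explicit formula: because the cone is simplicial, its $s$ extremal rays are forced to coincide, reducing to $Y_j = c_jX_j$; because barycentric coordinates are then unique, the $\sup$ in \eqref{entropy} degenerates and $\gamma$ becomes an explicit entropy expression; and the functional identity you derive is then killed by the $\varepsilon\log\varepsilon$ singularity near each vertex. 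Each step checks out: the computation $\langle\theta(p),\eta\rangle = 1/\bigl|\sum_j p_jX_j\bigr|$, the change of barycentric coordinates $q_j = p_jd_j/S$ with $S=\sum_i p_id_i$, the expansion of $Sh(q)$, and the asymptotics near $e_k$ (RHS is smooth with value $0$ at $e_k$, LHS carries the coefficient $\sum_{j\ne k}r_j(d_j-1)$ on $\varepsilon\log\varepsilon$) are all correct, and the conclusion $d_j=1$, hence $c_j=1$, follows. The caveat about $s=1$ is appropriate: the statement as literally written fails there (any two nonzero integers yield $\gamma\equiv 0$), but $s=1$ with $m=s=1$ never arises from a dust-like self-similar set. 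Net assessment: a clean, self-contained proof that derives the linear-independence rigidity from the \cite{FRZ14} machinery rather than reproducing the original combinatorial argument of \cite{RRW12}; it also sidesteps the heavier coplanar rigidity Theorem \ref{rigidity} (whose conclusion would still leave an iteration/permutation step to unwind).
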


The  rigidity property   also holds in the coplanar case (but the proof is much more difficult).
We define the $p$-th \emph{iteration} of $X$ to be the vector
 $$X^{(p)}=\left ( X_{i_1}+\cdots+X_{i_p}\right )_{{i_1\dots i_p}\in \{1,\dots,m\}^p}.$$
For example, the second iteration of $X=\left \{ (1,0),(0,1)\right \}$ is $\left \{(2,0),(1,1),(1,1),(0,2)\right \}$.

\begin{theorem}\label{rigidity} (\cite{FRZ14})
 Suppose  both $X=\{X_1,\dots, X_m\}$
and $Y=(Y_1,\dots, Y_{n})$ are coplanar, and   they
  define the same directional growth function. Then
there exist integers $p,q\geq 1$ such that
 the $p$-th iteration of $X$ is a permutation of the $q$-th iteration of $Y$.
\end{theorem}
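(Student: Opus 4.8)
\medskip
\noindent\textbf{Proof proposal for Theorem~\ref{rigidity}.} The plan is to encode a coplanar configuration by its \emph{defining polynomial} and to reduce the rigidity statement to an identity between polynomials. For $X=\{X_1,\dots,X_m\}\subset\Z^s$ lying in a half-space, after a monomial change of variables we may regard
\[
P_X(z)\ =\ \sum_{j=1}^m z^{X_j},\qquad z^{X}:=\prod_{i=1}^s z_i^{X_i},
\]
as a polynomial with nonnegative coefficients and no constant term, so $P_X>0$ on the positive orthant $(0,\infty)^s$. Three elementary observations set up the reduction: (a) $P_{X^{(p)}}=P_X^{\,p}$, and, since a multiset of lattice vectors is recorded faithfully by its indicator generating polynomial, $X^{(p)}$ is a permutation of $Y^{(q)}$ iff $P_X^{\,p}=P_Y^{\,q}$ as polynomials; (b) iteration does not change the directional growth function, $\gamma_{X^{(p)}}=\gamma_X$ (this is in \cite{FRZ14}; it also follows from \eqref{entropy}, the maximal entropy of a $p$-step walk with prescribed mean displacement being $p$ times that of a $1$-step walk, attained by product measures); (c) hence it suffices to prove: if $\mathbf C_X=\mathbf C_Y$ and $\gamma_X=\gamma_Y$, then $P_X^{\,p}=P_Y^{\,q}$ for some $p,q\ge 1$.

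\medskip
First I would read off a convex body from $\gamma$. Let $\eta_X$ be the unique vector with $\langle\eta_X,X_j\rangle=1$ for all $j$, and let $\Gamma_X$ be the positively $1$-homogeneous extension of $\gamma_X$ to $\mathbf C_X$. Substituting the Cram\'er--Gibbs formula $\sup\{h(p):\sum_j p_jX_j=u\}=\inf_\beta(\log\sum_j e^{\langle\beta,X_j\rangle}-\langle\beta,u\rangle)$ into \eqref{entropy} and using $\langle\eta_X,X_j\rangle\equiv1$ to absorb the Lagrange multiplier, one obtains, for $v\in\mathbf C_X$,
\[
\Gamma_X(v)\ =\ \inf\Bigl\{\langle\alpha,v\rangle\ :\ \alpha\in\R^s,\ \textstyle\sum_{j=1}^m e^{-\langle\alpha,X_j\rangle}\le 1\Bigr\}.
\]
The set $S_X:=\{\alpha:\sum_j e^{-\langle\alpha,X_j\rangle}\le 1\}$ is a closed strictly convex region whose recession cone is the dual cone of $\mathbf C_X$, and the displayed formula exhibits $\Gamma_X$ as (minus) its support function; hence $\Gamma_X$ determines $S_X$. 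Therefore $\mathbf C_X=\mathbf C_Y$ together with $\gamma_X=\gamma_Y$ forces $S_X=S_Y$, i.e., writing $z_i=e^{-\alpha_i}$,
\[
\{\,z\in(0,\infty)^s:\ P_X(z)\le 1\,\}\ =\ \{\,z\in(0,\infty)^s:\ P_Y(z)\le 1\,\};
\]
in particular the smooth hypersurfaces $\{P_X=1\}$ and $\{P_Y=1\}$ coincide inside $(0,\infty)^s$, on the same side.

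\medskip
Two steps remain. \emph{(i) Commensurability of the coplanar weights.} Each $P_X$ is homogeneous of degree one for the weighting $\eta_X$ (all exponents have $\eta_X$-degree $1$); equivalently, $\operatorname{Newt}(P_X)=\mathbf C_X\cap\{\langle\eta_X,\cdot\rangle=1\}$. I would show there are integers $t,s\ge 1$ with $\eta_{X^{(t)}}=\eta_{Y^{(s)}}$, equivalently $\operatorname{Newt}(P_X^{\,t})=\operatorname{Newt}(P_Y^{\,s})$. This is extracted from the behaviour of $\gamma$ near the extreme rays of the common cone $\mathbf C:=\mathbf C_X=\mathbf C_Y$ (equivalently, the asymptotics of $\partial S_X$ near the facets of the dual cone): along an extreme ray with primitive generator $e$, the vertex of $\operatorname{Newt}(P_X)$ sits at $\ell^X_e\,e$ with an integer lattice distance $\ell^X_e$ and carries an integer multiplicity $\mu^X_e$, and the invariant determines the real number $(\mu^X_e)^{1/\ell^X_e}$ (together with finer data on the lower-dimensional faces). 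Among pairs $(\mu,\ell)\in\Z_{>0}^2$ with $\mu^{1/\ell}$ fixed there is a well-defined minimal representative of which all others are integer powers; using that the local data at all faces must be globally compatible (the whole of $\gamma$ is known), one concludes that the $\eta$'s become equal after passing to suitable iterates. It is exactly here that the freedom $p,q\ge 1$ is unavoidable, since the invariant cannot tell $(\mu,\ell)$ from $(\mu^p,p\ell)$. \emph{(ii) From equal Newton polytopes to equal polynomials.} Having replaced $X,Y$ by $X^{(t)},Y^{(s)}$ — which changes neither the cone nor $\gamma$ — we may assume $\eta_X=\eta_Y=:\eta$, so $P_X,P_Y$ are both $\eta$-homogeneous of degree one with the same $1$-level hypersurface in $(0,\infty)^s$. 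Given $z\in(0,\infty)^s$, rescale by $\lambda=P_X(z)^{-1}$ along the $\eta$-action $z\mapsto(\lambda^{\eta_1}z_1,\dots,\lambda^{\eta_s}z_s)$, so that $P_X$ takes value $1$ at the rescaled point; that point lies on $\{P_Y=1\}$ too, whence homogeneity gives $P_X(z)^{-1}P_Y(z)=1$, i.e. $P_X(z)=P_Y(z)$ for all $z\in(0,\infty)^s$, hence $P_X=P_Y$ identically. Thus $X^{(t)}$ is a permutation of $Y^{(s)}$, as required.

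\medskip
The main obstacle is step (i): showing that the directional growth function pins down the slicing hyperplane, hence the Newton polytope, up to the iteration-rescaling — this requires combining the local asymptotics of $\gamma$ near every face of $\mathbf C$ with the integrality of the configurations. An alternative organization of this step, and perhaps of the whole theorem, is an induction on $\dim\operatorname{Newt}(P_X)$ and on its number of vertices: restricting $\gamma$ to the sub-cones over the facets of $\operatorname{Newt}(P_X)$ recovers, by the inductive hypothesis and up to iterates, the sub-configurations of $X$ and $Y$ on corresponding facets, the base case being the linearly independent case of \cite{RRW12} quoted above; one must then reconstruct the interior data and reconcile the various iterates. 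Step (ii), by contrast, is a short homogeneity argument, and the reduction and the Legendre step are routine once \eqref{entropy} and Theorem~\ref{Q(x)} are in hand.
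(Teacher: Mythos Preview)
The present paper does not contain a proof of Theorem~\ref{rigidity}: the result is quoted from \cite{FRZ14} (see the line ``but the proof is much more difficult'' preceding the statement). So there is no argument here to compare yours against. I will therefore comment on your outline on its own terms.

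Your reduction is sound and attractive. Encoding $X$ by $P_X(z)=\sum_j z^{X_j}$, the equivalence $X^{(p)}\ \text{is a permutation of}\ Y^{(q)}\iff P_X^{\,p}=P_Y^{\,q}$ is correct, and your Legendre computation from \eqref{entropy} is valid: using $\langle\eta_X,X_j\rangle\equiv 1$ to shift $\beta\mapsto\beta+c\,\eta_X$ with $c=\log\sum_j e^{-\langle\beta,X_j\rangle}$ indeed collapses the variational formula to
\[
\Gamma_X(v)=\inf\bigl\{\langle\alpha,v\rangle:\ \textstyle\sum_j e^{-\langle\alpha,X_j\rangle}\le 1\bigr\},
\]
so $\gamma_X=\gamma_Y$ forces $\{P_X=1\}=\{P_Y=1\}$ in $(0,\infty)^s$. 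Step (ii) is also clean: once $\eta_X=\eta_Y$, the $\eta$-scaling trick gives $P_X\equiv P_Y$ on the positive orthant, hence as polynomials.

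The gap is exactly where you locate it: step (i). Knowing only the hypersurface $H=\{P_X=1\}$ and that \emph{some} weighted-homogeneous polynomial with nonnegative integer coefficients has $H$ as its $1$-level does not, by soft reasoning, pin down the weight vector $\eta$ up to a rational multiple. Your proposed mechanism---reading off at each extreme ray $e$ of $\mathbf C$ the invariant $(\mu_e^X)^{1/\ell_e^X}$ from the asymptotics of $\partial S_X$, and then invoking unspecified ``global compatibility'' across faces to force a common iterate---is plausible at the vertices but is not an argument in between. Two concrete issues: (a) you have not shown that the boundary behaviour of $\gamma$ near a face of $\mathbf C$ depends only on the subconfiguration supported on that face (this needs a statement about how interior exponents contribute subdominantly, uniformly along the face); (b) even granting commensurability $(\mu_e^X)^{1/\ell_e^X}=(\mu_e^Y)^{1/\ell_e^Y}$ at every extreme ray, deducing a \emph{single} pair $(t,s)$ with $\eta_{X^{(t)}}=\eta_{Y^{(s)}}$ is a nontrivial integrality/compatibility statement, and your sketch does not supply it. The inductive variant you mention (restrict to sub-cones over facets, invoke the linearly independent base case of \cite{RRW12}) runs into the same reconciliation problem: the iterates produced on different facets need not agree, and you have not explained how to glue them.

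In short: the Legendre step and step (ii) are correct and elegant, but step (i)---which you yourself flag as the main obstacle---remains a sketch rather than a proof. Since the paper defers the whole argument to \cite{FRZ14}, filling this in would require either consulting that source or producing a complete argument for the commensurability of $\eta_X$ and $\eta_Y$ from the equality of the level sets.
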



\section{\textbf{Proof of Theorem \ref{main-1} (ii)}}

Let
$ \boldsymbol{\rho}=(\rho_1,\dots,\rho_m)$ be a contraction vector,
 $E\in{\cal D}(\boldsymbol{\rho})$,
and  $\boldsymbol{\lambda}=(\lambda_1,\dots,\lambda_s)$ be a pseudo-basis of $\langle \boldsymbol{\rho} \rangle$.
 Set
 $$
X_j=\log_{\boldsymbol{\lambda}}\rho_j ~(j=1,\dots,m)
 \text{ and }
 \balpha=-(\log \lambda_1,\dots, \log \lambda_s)
$$
as  in Section 1.
Then  a higher dimensional Frobenius problem can be defined.

For a word $ \mathbf{i}=i_1\dots i_k \in \Sigma_m^*,$
we define $ \rho_{\mathbf{i}}=\prod_{j=1}^k \rho_{i_j};$ then
\begin{equation}\label{eq-r2z}
\log_\blambda \rho_{\mathbf i}=\kappa ({\mathbf i})=X_{i_1}+\cdots+X_{i_k}.
\end{equation}


\begin{lemma}\label{lem-easy} For  any $\bi\in \Sigma_m^*$, it holds that

$\displaystyle (i) \  \log \rho_{\bi}=-\kappa(\bi)\cdot \alpha; \quad  \quad
(ii) \
|\bi|\leq \frac{ |\kappa(\bi) \cdot \balpha |}{\underset{1\leq j \leq m}\min  (X_j\cdot {\balpha}) }.\quad\quad\quad
$
\end{lemma}

\begin{proof} (i) By the definition of $\log_{\blambda}$, we have
$\rho_j=\lambda_1^{X_{j,1}}\cdots \lambda_s^{X_{j,s}}$ for all $j=1,\dots, m$. It follows that
$\log \rho_j=-X_j\cdot \balpha$ for all $j$. Therefore $\log \rho_{\bi}=-\kappa(\bi)\cdot \alpha$.

(ii) This follows from the fact  $ X_j \cdot \balpha>0$ for all $j$.

\end{proof}

\subsection{Cut sets}
For any $t\in (0,1),$    the \emph{cut-set} determined by the threshold $t$ is defined as
\begin{equation}\label{E,t}
{\cal W}_E(t):=\left \{ \mathbf{i}\in \Sigma_m^* :
 \mathbf{\rho}_{\mathbf{i}}\leq t <\mathbf{\rho}_{\mathbf{i}^*} \right \}
\end{equation}
where $\mathbf{i}^*$ is the word obtained by deleting the last letter of $\mathbf{i},$
\textit{i.e.}, $\mathbf{i}^*=i_1,\dots,i_{k-1}$ if $\mathbf{i}=i_1,\dots,i_k.$
(We define $ \rho_{\mathbf{i}^*}=1$ if the length of $\mathbf{i}$ equals $1.$)
 (see \cite{Fal-book2}).

For  $k\in \mathbb{N}^+,$ set
\begin{equation}\label{eq-AkE}
  {\cal A}_{k,E}={\cal A}_k:=\left  \{\log_{\blambda} \rho_{\bi};~ \bi\in    {\cal W}_E(e^{-k})\right\}.
\end{equation}
Then, by \eqref{eq-r2z},  ${\cal A}_{k}$ are subsets of $\CJ:=X_1\N+\cdots +X_m\N$. The following lemma describes
the distribution of ${\cal A}_k$. See Figure \ref{fig-Ak}.

 \begin{lemma}\label{chikaku} (i) For any ${\mathbf b}'\in {\mathcal A}_k$, it holds that
 \begin{equation}\label{equ-b'}
k\leq   {\mathbf b}'\cdot \balpha < k-\log \rho_{\min},
\end{equation}
where   $\rho_{\min}=\underset{1\leq j\leq m}{\min}\rho_j$.

 (ii) There is a constant $C_1$ (independent of $k$) such that
  $$
  d(x, {\cal A}_{k})<C_1
  $$
  for all $x\in {\mathbf C}_X$ with $x \cdot \balpha=k$.
\end{lemma}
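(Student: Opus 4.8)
The plan is to prove (i) by a direct logarithmic computation from the definition of the cut set, and then to prove (ii) by rounding $x$ to a lattice point of ${\mathcal J}$ and adjusting the corresponding word so that it falls into the cut set ${\mathcal W}_E(e^{-k})$.

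For (i): let $\mathbf{b}'\in{\mathcal A}_k$, so $\mathbf{b}'=\kappa(\bi)$ for some $\bi\in{\mathcal W}_E(e^{-k})$; by definition this means $\rho_{\bi}\leq e^{-k}<\rho_{\bi^*}$. By Lemma \ref{lem-easy}(i), $\mathbf{b}'\cdot\balpha=-\log\rho_{\bi}$, and $\rho_{\bi}\leq e^{-k}$ gives the lower bound $\mathbf{b}'\cdot\balpha\geq k$ at once. For the upper bound, write $\rho_{\bi}=\rho_{\bi^*}\,\rho_{i_{|\bi|}}>e^{-k}\rho_{\min}$ and take logarithms, obtaining $\mathbf{b}'\cdot\balpha=-\log\rho_{\bi}<k-\log\rho_{\min}$. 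This is \eqref{equ-b'}.

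For (ii): given $x\in{\mathbf C}_X$ with $x\cdot\balpha=k$, write $x=\sum_{j=1}^m t_jX_j$ with $t_j\geq 0$ and put $z=\sum_{j=1}^m\lfloor t_j\rfloor X_j\in{\mathcal J}$. Then $|x-z|\leq\sum_j|X_j|$ and $|z\cdot\balpha-k|\leq\sum_j(X_j\cdot\balpha)=:M_0$, both independent of $k$. The point $z$ equals $\kappa(\mathbf{w})$ for the word $\mathbf{w}$ that lists $\lfloor t_j\rfloor$ copies of the letter $j$, $j=1,\dots,m$, and $\rho_{\mathbf{w}}=e^{-z\cdot\balpha}$. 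Fix a letter $j_0$ with $\rho_{j_0}=\rho_{\min}$; along the infinite word $\mathbf{w}j_0j_0\cdots$ the values $\rho$ decrease strictly from $1$ to $0$, so exactly one prefix $\mathbf{v}$ of it lies in ${\mathcal W}_E(e^{-k})$, giving $\kappa(\mathbf{v})\in{\mathcal A}_k$. If $\rho_{\mathbf{w}}>e^{-k}$ then $\mathbf{v}=\mathbf{w}j_0^{\ell}$ for some $\ell\geq 0$, and $\ell\,(X_{j_0}\cdot\balpha)=\kappa(\mathbf{v})\cdot\balpha-z\cdot\balpha<(k-\log\rho_{\min})-(k-M_0)$ by (i), so $\ell$ is bounded independently of $k$; if $\rho_{\mathbf{w}}\leq e^{-k}$ then $\mathbf{v}$ is a prefix of $\mathbf{w}$, say $\mathbf{w}=\mathbf{v}\mathbf{u}$, and $\kappa(\mathbf{u})\cdot\balpha=z\cdot\balpha-\kappa(\mathbf{v})\cdot\balpha\leq(k+M_0)-k$ by (i), so $|\mathbf{u}|$ is bounded independently of $k$ by Lemma \ref{lem-easy}(ii). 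In both cases $|z-\kappa(\mathbf{v})|\leq N\max_j|X_j|$ with $N$ bounded, hence $|x-\kappa(\mathbf{v})|\leq|x-z|+|z-\kappa(\mathbf{v})|\leq C_1$ for some $C_1$ independent of $k$, which is what (ii) asserts.

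I expect the adjustment step in (ii) to be the main obstacle: the rounded lattice point $z$ need not be the visited position of any cut-set word, so one must travel from $z$ to a point of ${\mathcal A}_k$ by appending or deleting letters. The key leverage is part (i), which forces $\kappa(\mathbf{v})\cdot\balpha$ into an interval of bounded length; since each letter alters the $\balpha$-projection by at least $\min_j(X_j\cdot\balpha)>0$, this bounds the number of letters changed (Lemma \ref{lem-easy}(ii)) and converts the $\balpha$-projection estimate into a genuine Euclidean bound.
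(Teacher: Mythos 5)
Your proof of part (i) is the same as the paper's: unwind the cut-set inequalities $\rho_{\bi}\leq e^{-k}<\rho_{\bi^*}$, note $\rho_{\bi}>e^{-k}\rho_{\min}$, and apply Lemma \ref{lem-easy}(i) to translate into a statement about $\mathbf{b}'\cdot\balpha$.

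For part (ii) your route is genuinely different in its first half, and both halves are correct. The paper invokes Lemma 2.2 of \cite{FRZ14}, which asserts that the semigroup ${\mathcal J}$ is $R_0$-relatively dense in ${\mathbf C}_X$, to produce a nearby lattice point $\mathbf{b}\in{\mathcal J}$; you instead produce the nearby point $z=\sum_j\lfloor t_j\rfloor X_j$ by directly rounding the cone coordinates of $x$, yielding $|x-z|\leq\sum_j|X_j|$ and $|z\cdot\balpha-k|\leq\sum_j(X_j\cdot\balpha)$. This floor-rounding is elementary, requires no external citation, and in fact re-proves the relevant case of the relative-density lemma on the spot, so your version is more self-contained; what it gives up is the reusable black-box formulation in \cite{FRZ14}. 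The second half of your argument (travel along the tree from $z$ to a cut-set word, using part (i) to bound the change in $\balpha$-projection and Lemma \ref{lem-easy}(ii) to bound the number of letters altered, hence the Euclidean displacement) is the same as the paper's, with the cosmetic difference that you append a fixed letter $j_0$ whereas the paper appends an arbitrary extension. Two small remarks: since you always round down, $z\cdot\balpha\leq k$, so your ``$\rho_{\mathbf w}\leq e^{-k}$'' case forces $z\cdot\balpha=k$ and hence $\mathbf{u}=\emptyset$; the case analysis is harmless but the second branch is degenerate. Also, you reuse the symbol $M_0$ for $\sum_j(X_j\cdot\balpha)$, clashing with the matchable constant introduced later in Section 4 — better to pick a fresh letter.
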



\begin{proof} (i)
Take any ${\mathbf b}'=\kappa({\mathbf i}) \in {\mathcal A}_k$. Then
$
e^{-k} \rho_{\min} < \rho_{\mathbf{i}} \leq e^{-k},
$
 so,  taking logarithm  at all sides of the inequality and using Lemma \ref{lem-easy}(i), we obtain \eqref{equ-b'}.

(ii) By Lemma 2.2 in \cite{FRZ14}, there is a constant  $R_0$   such that ${\mathcal J}$ is \emph{$R_0$-relatively dense} in
 ${\mathbf C}_X$ , that is,  for any $x\in {\mathbf C}_X$, there exists $y\in \CJ$
such that $|x-y|<R_0$.

  Take any $x\in  {\mathbf C}_X$ on the hyperplane $x \cdot \balpha =k$.
 Let  ${\mathbf b}$ be a point in ${\mathcal J}$  such that $|x-{\mathbf b}|\leq R_0$, then, by the triangle inequality,
\begin{equation}\label{equ-b}
k-R_0|\balpha|\leq {\mathbf b} \cdot \balpha \leq k+R_0|\balpha|.
\end{equation}
 Let $\bi=i_1 \dots i_l\in \Sigma_m^*$ be a path such that
 $\kappa(\bi)={\mathbf b}.$

If  $\rho_{i_1 \dots i_l}\leq e^{-k}, $ then there exists $p\leq l$ such that
$i_1\dots i_p\in {\cal W}_E(e^{-k})$. Set
${\mathbf b}'=\log_{\blambda}\rho_{i_1\dots i_p}$.
 Then \eqref{equ-b} and   \eqref{equ-b'} imply that
 $$
| ({\mathbf b}-{\mathbf b}')\cdot  \balpha |\leq -\log \rho_{\min}+R_0|\balpha|,
$$
so, since ${\mathbf b}-{\mathbf b}'=X_{i_{p+1}}+\cdots+X_{i_l}$, by Lemma \ref{lem-easy}(ii), we have
$$
|l-p| 
\leq \frac{-\log\rho_{\min}+R_0|\balpha|}{\underset{1\leq j \leq m}\min (X_j\cdot {\balpha})  }:=C',
$$
and
$$|{\mathbf b}-{\mathbf b}'|
\leq C'\underset{1\leq j\leq m}\max|X_j|.
$$

If $\rho_{i_1,\dots,i_l}>e^{-k},$
  then there exists $i_{l+1}\dots i_p\in \Sigma_m^*$ such that
$i_1\dots i_li_{l+1}\dots i_p\in {\cal W}_E(e^{-k}).$
An argument  similar  as above shows that all the above relations still hold.

Hence, we always have
$\displaystyle
|x-{\mathbf b}'|\leq R_0+C'\underset{1\leq j\leq m}\max|X_j|:=C_1,
$
which proves (ii).
\end{proof}

\begin{figure}
  \includegraphics[width=7cm]{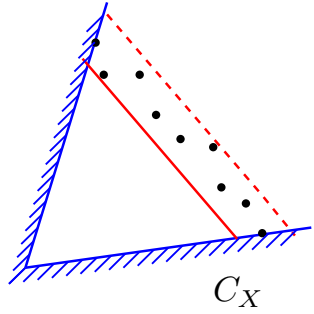}\\
  \caption{ Distribution of ${\mathcal A}_k$. The solid line is the hyperplane $x\cdot \balpha=k$, and
  the dot line is the hyperplane $x\cdot \balpha=k-\log \rho_{\min}$. }
  \label{fig-Ak}
\end{figure}

\subsection{Matchable condition}
Let $E$ and $F$ be two dust-like self-similar sets with contraction vectors
$\boldsymbol{\rho}$ and $\boldsymbol{\tau}$, respectively.
Suppose  $\boldsymbol{\rho}$ and $\boldsymbol{\tau}$ have a
 common pseudo-basis $\boldsymbol{\lambda}=(\lambda_1,\dots,\lambda_s)$.
Let $h$ be a distance on  the group $ \langle \boldsymbol{\rho},\boldsymbol{\tau}\rangle$ defined by
\begin{equation}\label{dis}
h(x_1,x_2):=|\log_{\boldsymbol{\lambda}}x_1-\log_{\boldsymbol{\lambda}}x_2 |.
\end{equation}
Denote $\#A$ the cardinality of a set $A$.

\begin{defi} (\cite{RRW12})
Let $M_0>0$ be a constant and $t\in (0,1).$  We say that ${\cal W}_E(t)$ and ${\cal W}_F(t)$ are $(M_0,h)$-\emph{matchable}, or simply $M_0$-matchable,
if there exists a relation $ {\cal R}\subset {\cal W}_E(t)\times {\cal W}_F(t)$ such that

\begin{itemize}
\item[(i)] $1\leq \# \{ \mathbf{j}:( \mathbf{i}, \mathbf{j}) \in {\cal R}\}\leq M_0$ for any $\mathbf{i}\in {\cal W}_E(t) $,
and\\
 $1\leq \# \{ \mathbf{i}:( \mathbf{i}, \mathbf{j}) \in {\cal R}\}\leq M_0$ for any $\mathbf{j}\in {\cal W}_F(t); $

\item[(ii)] If $ ( \mathbf{i}, \mathbf{j}) \in {\cal R},$ then $h(\rho_{\mathbf{i}},\tau_{\mathbf{j}})\leq M_0.$
\end{itemize}
\end{defi}

The matchable condition is necessary for bi-Lipschitz equivalence.

\begin{thm}\label{thm-match} (\cite{RRW12})
Let $E$ and $F$ be two dust-like self-similar sets.
If $E\sim F,$ then  there exists a constant $M_0$ such that for all
$t\in(0,1),$ ${\cal W}_E(t) $ and $ {\cal W}_F(t)$ are $M_0$-matchable.
\end{thm}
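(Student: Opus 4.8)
The plan is to run the measure-theoretic argument of Rao, Ruan and Wang \cite{RRW12}: transfer the bi-Lipschitz equivalence to a comparison of the natural measures, read the relation $\mathcal R$ off the overlap pattern of cylinders under $f$, and then control the symbolic displacement of matched pairs. First I would fix the measure theory. Since $E$ is dust-like with $\sum_j\rho_j^\delta=1$, the Hausdorff measure $\mathcal H^\delta|_E$ is finite, positive and a constant multiple of the natural self-similar probability measure $\mu$ (determined by $\mu(\phi_\bi(E))=\rho_\bi^\delta$), and $E$ is Ahlfors $\delta$-regular; likewise for $F$ with a measure $\nu$, $\nu(\psi_\bj(F))=\tau_\bj^\delta$, and with the \emph{same} exponent $\delta$, since a bi-Lipschitz bijection preserves Hausdorff dimension (\cite{Hut81,Fal-book2}). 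From $C^{-\delta}\mathcal H^\delta(A)\le\mathcal H^\delta(f(A))\le C^\delta\mathcal H^\delta(A)$ for Borel $A\subset E$, one obtains a constant $L\ge 1$ (depending only on $C,\delta$ and the normalizing constants) with $L^{-1}\mu(A)\le\nu(f(A))\le L\mu(A)$ for all Borel $A\subset E$; I would also record the elementary fact that $f$ carries balls to balls up to the factor $C$.

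Next, fix $t\in(0,1)$ and write $E_\bi=\phi_\bi(E)$, $F_\bj=\psi_\bj(F)$. Because $\mathcal W_E(t)$ is a complete antichain and $E$ is dust-like, $\{E_\bi\}_{\bi\in\mathcal W_E(t)}$ is a partition of $E$ into sets of diameter $\asymp t$ and $\mu$-mass $\asymp t^\delta$, with implied constants depending only on $\rho_{\min},\delta,\mathrm{diam}\,E$; similarly for $F$. I would then define
\[
\mathcal R=\mathcal R_t:=\bigl\{(\bi,\bj)\in\mathcal W_E(t)\times\mathcal W_F(t)\ :\ \nu\bigl(f(E_\bi)\cap F_\bj\bigr)\ge\varepsilon_0\, t^\delta\bigr\}
\]
for a small constant $\varepsilon_0>0$. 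Condition (i) of the matchable property then follows from two observations, both using Ahlfors regularity. First, $f(E_\bi)$ has diameter $\le Ct\,\mathrm{diam}\,E$, so every $F_\bj$ meeting it lies in a ball of radius $\asymp t$ about a point of $F$; that ball has $\nu$-mass $\lesssim t^\delta$, while each $F_\bj$ has $\nu$-mass $>\tau_{\min}^\delta t^\delta$ and the $F_\bj$ are disjoint, so only boundedly many $\bj$ meet $f(E_\bi)$, and symmetrically (applying this to $f^{-1}$) only boundedly many $\bi$ have $E_\bi$ meeting $f^{-1}(F_\bj)$. Second, $\sum_\bj\nu(f(E_\bi)\cap F_\bj)=\nu(f(E_\bi))\ge L^{-1}\mu(E_\bi)>L^{-1}\rho_{\min}^\delta t^\delta$ and $\sum_\bi\nu(f(E_\bi)\cap F_\bj)=\nu(F_\bj)>\tau_{\min}^\delta t^\delta$. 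Combining the two: for $\varepsilon_0$ chosen small enough relative to the hidden constants, every $\bi$ and every $\bj$ has at least one partner, while the two displayed sums ($\le Lt^\delta$ and $\le t^\delta$ respectively) bound the number of partners by $L/\varepsilon_0$; all of this is uniform in $t$.

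The crux is condition (ii): that $(\bi,\bj)\in\mathcal R_t$ forces $h(\rho_\bi,\tau_\bj)=|\log_\blambda\rho_\bi-\log_\blambda\tau_\bj|\le M_0$. One component is free: since $\bi\in\mathcal W_E(t)$ and $\bj\in\mathcal W_F(t)$ we have $\rho_{\min}\le\rho_\bi/\tau_\bj\le\tau_{\min}^{-1}$, so by Lemma \ref{lem-easy}(i)
\[
\bigl|(\log_\blambda\rho_\bi-\log_\blambda\tau_\bj)\cdot\balpha\bigr|=\bigl|\log(\rho_\bi/\tau_\bj)\bigr|\ \le\ -\log\rho_{\min}-\log\tau_{\min},
\]
which controls the displacement along $\balpha$ and already settles (ii) when $\mathrm{rank}\,\langle\brho,\btau\rangle=1$. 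For the $s-1$ transverse directions I would renormalize: a pair $(\bi,\bj)\in\mathcal R_t$ gives the map $\widetilde f:=\psi_\bj^{-1}\circ f\circ\phi_\bi$, which is bi-Lipschitz with constant $\le C\max(\rho_\bi/\tau_\bj,\tau_\bj/\rho_\bi)\le C'$, where $C'$ depends only on $C,\rho_{\min},\tau_{\min}$, on the set $A:=\phi_\bi^{-1}(E_\bi\cap f^{-1}(F_\bj))$, and this $A$ has $\mu(A)\ge L^{-1}\varepsilon_0$ with $\nu(\widetilde f(A))\ge\varepsilon_0$. So every matched pair carries a $C'$-bi-Lipschitz map between subsets of $E$ and $F$ of definite mass, with the \emph{same} data for all $t$. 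The step I expect to be the real obstacle is to convert this uniform family of renormalized maps — exploited against the self-similar structure of $E$ and $F$ and the bounded-gap/relative-density description of cut-sets in Lemma \ref{chikaku} — into a uniform bound on the transverse part of $\log_\blambda(\rho_\bi/\tau_\bj)$; this is the technical heart of \cite{RRW12}, and it genuinely uses the dust-like self-similar structure rather than mere Ahlfors regularity. Granting it, one takes $M_0$ to be the maximum of $L/\varepsilon_0$ and the bound just obtained, and the matchable condition holds for every $t$.
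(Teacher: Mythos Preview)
This theorem is not proved in the present paper; it is quoted from \cite{RRW12} and used as a black box in Section~4. So there is no in-paper argument to compare your sketch against.

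That said, your outline has a genuine gap, which you yourself flag. The setup and condition~(i) are fine and follow the standard Ahlfors-regularity line. For condition~(ii), bounding the $\balpha$-component of $\log_\blambda\rho_\bi-\log_\blambda\tau_\bj$ is indeed immediate from the cut-set definition, but the transverse bound is the whole difficulty when $s\ge 2$, and your renormalization $\widetilde f=\psi_\bj^{-1}\circ f\circ\phi_\bi$ does not, by itself, supply it. After rescaling, $\widetilde f$ is simply some $C'$-bi-Lipschitz map between subsets of $E$ and $F$ of definite mass; it carries no record of the particular pair $(\bi,\bj)$ that produced it, and two matched pairs with very different $\log_\blambda$-displacements could in principle yield the same renormalized map. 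So one cannot read a bound on $\log_\blambda(\rho_\bi/\tau_\bj)$ off properties of $\widetilde f$ alone, and invoking Lemma~\ref{chikaku} does not help here either, since that lemma only controls positions along $\balpha$.

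The mechanism in \cite{RRW12} is different and more arithmetic. Building on the measure-preserving observation of Xi and Ruan \cite{XiRu08}, one shows that for $\mu$-a.e.\ $x\in E$ with code $i_1i_2\cdots$ and image code $j_1j_2\cdots$, the ratios $\rho_{i_1\cdots i_k}/\tau_{j_1\cdots j_{l(k)}}$ (with $l(k)$ chosen so that both words lie in the same cut-set) take only \emph{finitely many} values in $\langle\brho,\btau\rangle$, uniformly in $x$ and $k$. It is this finiteness---a statement about the ratio group, not about bi-Lipschitz constants---that forces $\log_\blambda\rho_\bi-\log_\blambda\tau_\bj$ into a fixed finite subset of $\mathbb Z^s$ and hence gives the uniform bound on $h$. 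Your overlap relation $\mathcal R_t$ is compatible with this picture, but to close condition~(ii) you would still need to invoke or reproduce that finiteness result; the renormalization heuristic is suggestive but does not substitute for it.
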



We shall use $\m_E$ to denote the multiplicity function of the higher dimensional Frobenius problem associated with
 $E$, and $\m_F$ the function corresponding to $F$.
Similarly,  let $\gamma_E$ and $\gamma_F$ be the directional growth function corresponding to $E$ and $F$, respectively;
let $\CJ_E$ and $\CJ_F$  be the semi-group corresponding to $E$ and $F$ respectively.

\medskip

\subsection{\textbf{Proof of Theorem \ref{main-1} (ii).}}
Suppose $E\sim F$. Let $M_0$ be the constant in Theorem \ref{thm-match}. Then, for each $k\ge 1,$ there exists an $M_0$-matchable relation ${\cal R}_k$   between $ {\cal W}_E(e^{-k})$ and $ {\cal W}_F(e^{-k}).$

First,  ${\mathbf C}_{X}={\mathbf C}_{Y}$ by Theorem \ref{main-1} (i), and
we denote this common cone by ${\mathbf C}$.

Fix a unit vector $\theta\in {\mathbf C}$ and $k\geq 1$.   Denote
$$
{O}_{k}=\frac{k\theta} {\theta \cdot \balpha },
$$
then $O_k \cdot \balpha=k$.
Let  $w_{k,E}$ be a point in ${\mathcal A}_{k,E}$  such that
\begin{equation}\label{OK-1}
|w_{k,E}-O_{k}|<C_1,
\end{equation}
where $C_1$ is the constant in Lemma \ref{chikaku}; similarly, let $w_{k, F}$
be a point in ${\mathcal A}_{k,F}$ such that
\begin{equation}\label{OK-2}
|w_{k,F}-O_{k}|<C_1.
\end{equation}
(Here we choose  $C_1$   such that
  Lemma \ref{chikaku} holds  for $E$ and $F$ simultaneously.)
 We claim

 \medskip

 \emph{Claim 1. There exists a polynomial $P(x)$ (independent of $\theta$) with positive coefficients such that
 $$
\frac{\bm_E(w_{k,E}) }{\bm_F(w_{k,F})}\leq P(k)
$$
 for all $k\geq 1$ and all unit vector $\theta\in {\mathbf C}$. }

 \medskip
 For a path $\bi=i_1\dots i_\ell \in \Sigma_m^*$, we say $\bi$ \emph{enters} a set $B$
  w.r.t. $E$ if
 $$\log_{\blambda} \rho_{\bi}=X_{i_1}+\cdots+X_{i_\ell}\in B.$$

 Let
$
 I_{k}=\{\mathbf{i};~ \log_{\blambda}\rho_{\bi}=w_{k,E}\}
 $
 be the collection of paths entering the singleton set $\{w_{k,E}\}$. Then, by definition,
   \begin{equation}\label{eq-vs-1}
   {\mathbf m}_E(w_{k,E})=\#I_{k}.
   \end{equation}

   We divide the proof of Claim 1 into three steps.

   \medskip

   \textbf{Step 1. Comparing $\#I_k$ and $\#I_k^*$.}

    Set $I_k^*$ to be the paths in ${\mathcal W}_E(e^{-k})$ which will \emph{eventually enter} $\{w_{k,E}\}$, \emph{i.e.},
$$
I^*_k=\{\bi'\in  {\cal W}_E(e^{-k});~\bi' \text{ is a prefix of some }\bi\in I_k\}.
$$
Write $\bi=\bi'\bi''$. Then, by Lemma \ref{chikaku}(i), since both $\kappa(\bi)$ and $\kappa(\bi')$ belong to ${\mathcal A}_{k,E}$,
$$
\kappa(\bi'')\cdot \balpha=\kappa(\bi)\cdot \balpha-\kappa(\bi')\cdot \balpha\leq -\log \rho_{\min}.
$$
Hence, by Lemma \ref{lem-easy}(ii), there is a constant $K_0$ (independent of $k$ and $\theta$) such that $|\bi|-|\bi'|<K_0$,
and from which we obtain estimates
on the cardinality and  the entering positions of $I_k^*$ as follows.

First,  for any $\bi'\in I_k^*$,  there  are at most $m^{K_0}$ elements in $I_k$  having $\bi'$ as  prefix, so
  \begin{equation}\label{eq-vs-2}
  m^{K_0} (\#I_k^*)\geq \# I_k.
  \end{equation}

Secondly, all elements of $I^*_k$ enter (w.r.t. $E$) the disc with center
$w_{k,E}$ and radius $R_1$, where
$$R_1=K_0\max_{1\leq j\leq m}|X_j|.$$
 In other words,
$\displaystyle |\log_{\blambda}\rho_{\bi'}-w_{k,E}|<R_1 $
for all $\bi'\in I_k^*$.
The above disc $B(w_{k,E}, R_1)$ is the small disc in Figure \ref{discs}.

\medskip

\begin{figure}
  \includegraphics[width=8cm]{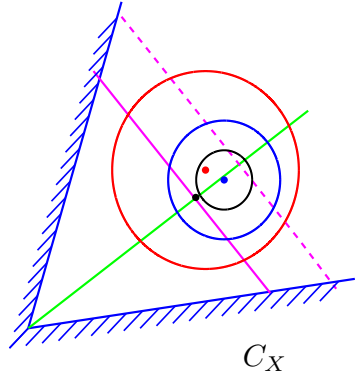}\\
  \caption{The black point is $O_k$, the red point is $w_{k,F}$, and the blue point is $w_{k,E}$;
  the small disc and the medium disc have the common center $w_{k,E}$, their  radii are  $R_1$ and  $R_2$ respectively, the large disc has center $w_{k,F}$ and radius $R_3$.}
  \label{discs}
\end{figure}

  \textbf{Step 2. Comparing $\#I_k^*$ and $\#J_k^*$.}

  Let $J_k^*$ be the set of elements in ${\cal W}_F(e^{-k})$ which has  ${\mathcal R}_k$-relation with   at least one element of $I_k^*$,
\textit{i.e.},
$$J_{k}^*= \{\mathbf{j}\in {\cal W}_F(e^{-k});~
\exists  \mathbf{i} \in I_{k}^* \text { such that }(\mathbf{i},\mathbf{j})\in {\cal R}_k\}.$$

Condition (i)  in the  definition  of the matchable condition implies that
\begin{equation}\label{eq-vs-3}
\# J_{k}^*\geq M_0^{-1}(\#I_{k}^*),
\end{equation}
and Condition (ii) there implies that  for every $\mathbf{j}\in J_{k}^*$,
\begin{equation}\label{IJ-2}
| \log_{\boldsymbol{\lambda} }(\tau_{\mathbf{j}})-w_{k,E}|\leq |\log_{\blambda} (\tau_{\bj})-\log_{\blambda} (\rho_{\bi'})|+|\log_{\blambda}(\rho_{\bi'})-w_{k,E}|\leq M_0+R_1:=R_2,
\end{equation}
where $\mathbf i'$ is any element in $I_k^*$ such that it is ${\mathcal R}_k$-related to ${\mathbf j}$.
We  shall
 call the disc  $B(w_{k,E}, R_2)$ the \emph{medium disc}, see Figure \ref{discs}.
\medskip

\textbf{Step 3. Comparing $\#J_k^*$ and $\m_F(w_{k,F})$.}

Let $G$ be the collection of paths (w.r.t. $F$)  entering the medium disc, precisely,
 $$
 G= \{ \mathbf{j};~| \log_{\boldsymbol{\lambda} }(\tau_{\mathbf{j}})-w_{k, E}|\leq R_2 \}.
$$
 From  \eqref{IJ-2}, we see that $J_k^*\subset G$, so
\begin{equation}\label{eq-vs-4}
\#  G  \geq \# J_{k}^*.
\end{equation}

 Hence we need only compare $\#G$ and $\m_F(w_{k,F})$. This is done by the following lemma.

\begin{lemma} There is a polynomial $\tilde P(x)$ (independent of $\theta$) with positive coefficients such that
$$\#G\leq \tilde P(k) \mathbf{m}_F(w_{k,F}).$$
\end{lemma}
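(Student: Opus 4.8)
The goal is to bound $\#G$, the number of paths $\mathbf{j}$ (with respect to $F$) whose visited position $\log_{\blambda}\tau_{\mathbf j}$ lies in the medium disc $B(w_{k,E},R_2)$, by a polynomial multiple of $\mathbf{m}_F(w_{k,F})$. The key observation is that $G$ collects paths of possibly many different lengths landing at possibly many different lattice points, whereas $\mathbf{m}_F(w_{k,F})$ counts paths landing at the single point $w_{k,F}$. So the strategy is: (1) the medium disc contains only boundedly many points of $\CJ_F$ that are actually visited positions; (2) for each such visited point $z$, the multiplicity $\mathbf{m}_F(z)$ is comparable (up to a polynomial factor) to $\mathbf{m}_F(w_{k,F})$, via Theorem \ref{Q(x)}; (3) summing $\mathbf{m}_F(z)$ over the boundedly many such $z$ gives $\#G$ up to control of path lengths.

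First I would record that any path $\mathbf j$ counted in $G$ has length controlled: since $\log_{\blambda}\tau_{\mathbf j}$ is within $R_2$ of $w_{k,E}$, and $w_{k,E}$ is within $C_1$ of $O_k$ (which satisfies $O_k\cdot\balpha=k$), the triangle inequality gives $|(\log_{\blambda}\tau_{\mathbf j})\cdot\balpha - k|\le (R_2+C_1)|\balpha|$, so by Lemma \ref{lem-easy}(ii) applied to the $F$-system, $|\mathbf j|$ is bounded by roughly $k/\min_k(Y_k\cdot\balpha)$ plus a constant — call this bound $L_k$, which is linear in $k$. Next, the set of visited positions inside the medium disc is $Z_k := \{z\in\CJ_F : |z-w_{k,E}|\le R_2\}$; since $\CJ_F\subset\mathbb Z^s$ and $R_2$ is a fixed constant, $\#Z_k\le N_0$ for a constant $N_0$ depending only on $R_2$ and $s$. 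Now
\[
\#G = \#\{\mathbf j : \log_{\blambda}\tau_{\mathbf j}\in Z_k,\ |\mathbf j|\le L_k\} \le \sum_{z\in Z_k}\#\{\mathbf j:\kappa_F(\mathbf j)=z\} = \sum_{z\in Z_k}\mathbf{m}_F(z).
\]
(The length restriction is automatically in force but costs nothing here since $\mathbf m_F(z)$ already counts all such paths, all of which have length $\le L_k$.)

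It remains to compare each $\mathbf m_F(z)$ with $\mathbf m_F(w_{k,F})$. Both $z$ and $w_{k,F}$ lie in $\CJ_F$; moreover $|z-w_{k,F}|\le |z-w_{k,E}|+|w_{k,E}-O_k|+|O_k-w_{k,F}| \le R_2+C_1+C_1$, a constant $C_0$ independent of $k$ and $\theta$. Hence Theorem \ref{Q(x)} (applied to the $F$-problem with this $C_0$) furnishes a polynomial $Q$ with positive coefficients such that $\mathbf m_F(z)\le Q(|z|)\,\mathbf m_F(w_{k,F})$; and $|z|\le |w_{k,E}|+R_2 \le |O_k|+C_1+R_2 = k/|\theta\cdot\balpha|\cdot|\theta| + C_1+R_2$, which is linear in $k$ — here one uses that $\theta$ ranges over the unit sphere intersected with $\mathbf C$, so $\theta\cdot\balpha$ is bounded below by a positive constant, making $|O_k|\le c\,k$ uniformly in $\theta$. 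Therefore $\mathbf m_F(z)\le Q(ck+C_1+R_2)\mathbf m_F(w_{k,F})$ for every $z\in Z_k$, and summing over the $\le N_0$ points of $Z_k$ gives $\#G\le N_0\,Q(ck+C_1+R_2)\,\mathbf m_F(w_{k,F})$. Setting $\tilde P(x) := N_0\,Q(cx+C_1+R_2)$, a polynomial with positive coefficients independent of $\theta$, completes the proof.

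**Main obstacle.** The routine part is the counting/triangle-inequality bookkeeping; the one genuinely load-bearing ingredient is Theorem \ref{Q(x)}, the quasi-polynomial stability of the multiplicity function — without it, two nearby visited positions could have wildly different multiplicities and the comparison would fail. The subtlety to be careful about is the uniformity in $\theta$: one must check that $|O_k|$ and hence $|z|$ grow at most linearly in $k$ with a constant independent of $\theta$, which follows because $\theta$ lives on the compact set $\{$unit vectors in $\mathbf C\}$ and $\theta\cdot\balpha>0$ there, so $\inf_\theta \theta\cdot\balpha>0$. Everything else — the bound on $\#Z_k$, the length bound $L_k$, the constant $C_0$ — is immediate from the lemmas already established.
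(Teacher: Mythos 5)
Your proof is correct and follows essentially the same route as the paper's: bound $\#G$ by summing $\mathbf{m}_F(z)$ over the boundedly many lattice points $z\in\CJ_F$ in the relevant disc, then use Theorem \ref{Q(x)} (with the constant-distance bound $|z-w_{k,F}|\le R_2+2C_1$) and the compactness of the set of unit directions in $\mathbf C$ to control $Q(|z|)$ by $Q(ck)$ uniformly in $\theta$. The only cosmetic difference is that the paper re-centres at $w_{k,F}$ and introduces a ``large disc'' of radius $R_3=R_2+2C_1$ containing the medium disc, whereas you work directly with the medium disc and bound $|z-w_{k,F}|$ by the triangle inequality — same constants, same ingredients; and your preliminary remark about a length bound $L_k$ is, as you note yourself, not actually used.
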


\begin{proof}
We move the center from $w_{k,E}$ to $w_{k,F}$,
and   call the disc with center $w_{k,F}$ and radius $R_3:=R_2+2C_1$ the \emph{large disc}.
It is easy to verify that  the large disc contains  the medium  disc as a subset.

Let $z$ be a point in the large disc. Then
$|z|$ is bounded by
$$
|z|<|O_{k}|+C_1+R_3\leq (\theta\cdot \alpha)^{-1}k+R_3+C_1<ck,
$$
where $c=\max\left \{(\theta\cdot \alpha)^{-1}; ~\theta\in {\mathbf C}_X \text{ and } |\theta|=1\right \}+R_3+C_1.$
By a compact argument, we see that $c<+\infty$. Hence, by Theorem \ref{Q(x)},
  there exists a polynomial $Q(x)$ such that
\begin{equation}\label{eq-wEF-2}
{\mathbf{m}_F(z)}\leq Q(|z|) \cdot {\mathbf{m}_F(w_{k,F}) }
\leq Q(ck) \cdot {\mathbf{m}_F(w_{k,F}) }
\end{equation}
for all $z\in \CJ_F$ in the large disc.

Denoting $N_0$ the number of integer points containing in the large disc,
we have
$$
\begin{array}{rl}
  \#G= &\#\{\text{paths  entering the medium disc w.r.t. $F$}\}\\
\leq & \#\{\text{paths entering the large disc w.r.t. $F$}\}\\
= & \sum_\text{$z\in \CJ_F$ in the large disc} \mathbf{m}_F(z)  \\
\leq &N_0 Q(ck)\mathbf{m}_F(w_{k,F}). \quad \quad (\text{By } \eqref{eq-wEF-2})
\end{array}
 $$
 The lemma is valid by setting $\tilde P(x)=N_0Q(cx)$.
 \end{proof}

Summarizing the above estimates, we obtain
\begin{eqnarray*}
\m_E(w_{k,E})&=& (\#I_k)\leq m^{K_0}(\#I_k^*)\leq m^{K_0}M_0(\# J_k^*)\\
        &\leq & m^{K_0}M_0(\#G)\leq m^{K_0}M_0 \tilde P(k) \m_F(w_{k,F}),
\end{eqnarray*}
which proves Claim 1, where we set $P(x)=m^{K_0}M_0 \tilde P(x)$.

It follows that, for all $k\geq 1$,
\begin{equation}\label{log3}
\frac{\log \bm_E(w_{k,E})}{|O_k|}
\leq \frac{\log (P(k))}{|O_k|}+\frac{\log \bm_F(w_{k,F})}{|O_k|}.
\end{equation}

Let  $z_{k,E}$  be a point in ${\mathcal J}_E$ such that $|O_{k}-z_{k,E}|$
  attains the minimal value and $\m_E(O_k)=\m_E(z_{k,E})$. Since
  $$
  |z_{k, E}-w_{k,E}|\leq |z_{k,E}-O_k|+|O_k-w_{k,E}|<2C_1,
  $$
  again by Theorem \ref{Q(x)}, we have
  $$
  \lim_{k\to \infty} \frac{\log \bm_E(w_{k,E})}{|O_k|}=\lim_{k\to \infty} \frac{\log \bm_E(z_{k,E})}{|O_k|}:=\gamma_{_E}(\theta).
  $$
Similar result holds for $\gamma_{_F}(\theta)$ by the same argument as above. Hence, taking  limits over both sides of (\ref{log3}),  we obtain $ \gamma_{_E}(\theta)\leq  \gamma_{_F}(\theta)$.
Finally, by symmetry, we get the other side inequality and hence  $\gamma_{_E}(\theta)= \gamma_{_F}(\theta)$.
$\Box$

\medskip

 \section{\textbf {Proof of Theorem \ref{main-2} and Example \ref{exam-full} }}\label{sec-final}

\subsection{ Proof of Theorem \ref{main-2}.}  Suppose ${\mathcal D}(\brho)\sim {\mathcal D}(\btau)$. By Theorem \ref{main-1}, we have
 $\gamma_{_X}=\gamma_{_Y}$. Hence, by Theorem \ref{rigidity}, there exist $p$ and $q$ such that
 the $p$-th iteration of $X$ is a permutation of the $q$-th iteration of $Y$.
 It follows that the $p$-th iteration of $\brho$ is a permutation of the $q$-th iteration of $\btau$.

 On the other hand, suppose that $\brho^{p}$ is a permutation of $\btau^{q}$,
 then
 $${\mathcal D}(\brho^{p})\sim {\mathcal D}(\btau^{q}).$$
  Since an IFS and its $n$-th
 iteration define the same invariant set (\cite{Fal-book2}), we have ${\mathcal D}(\brho)\sim {\mathcal D}(\brho^{p})$
 and ${\mathcal D}(\btau)\sim {\mathcal D}(\btau^{q})$. Therefore, ${\mathcal D}(\brho)\sim {\mathcal D}(\btau)$.
$\Box$

\medskip

\subsection{Proof of Example \ref{exam-full}}
Notice  both $\brho$ and $\btau$ are coplanar.
Hence ${\mathcal D}(\brho)\sim {\mathcal D}(\btau)$ implies that  their exist two integers $p,q>0$ such that
the $p$-iteration of $\brho$ is a permutation of the $q$-th iteration of $\btau$.

Take any $j\in \{1,\dots, s\}$. There are $a_j^p$  terms in the vector $\brho^{p}$ being a power of $\lambda_j$, actually, equal to $\lambda_j^{pk_j}$; similarly, there are $b_j^q$ terms in $\btau^{q}$ being a power of $\lambda_j$ and actually equal to $\lambda_j^{q\ell_j}$. We conclude that
$$
a_j^p=b_j^q, \quad \text{ for all } j=1,\dots, s.
$$
Counting the dimensions of $\brho^{p}$ and $\btau^{q}$, we obtain
$$
(a_1+\cdots+a_s)^p=(b_1+\cdots+b_s)^q.
$$
By the convexity of the function
$f(x)=x^{p/q}$ and Jensen's inequality, these equations can hold simultaneously only when $p=q$, and $a_j=b_j$ for all $j$. Hence, we can take $p=q=1$, and it follows that
$\brho$ is a permutation of $\btau$. $\Box$


\begin{thebibliography}{99}
\bibliographystyle{ieee}
\addcontentsline{toc}{chapter}{Bibliography}

\bibitem{Alf05} J. Ramirez Alfonsin,  \emph{The Diophantine Frobenius problem,} Oxford Univ. Press, 2005.

\bibitem{CP} D.~Cooper and T.~Pignataro, {\it On the shape of Cantor
sets}, J. Differential Geom., \textbf{28} (1988), 203--221.

\bibitem{DS}
G.~David and S.~Semmes, {\it Fractured fractals and broken dreams :
self-similar geometry through metric and measure}, Oxford Univ.\
Press,~1997.

\bibitem{DengGT1} G. T. Deng and X. G. He, \emph{Lipschitz equivalence of fractal sets in $\R$,
} Sci. China. Math., \textbf{55} (2012), 2095--2107.

\bibitem{Deng11} J. Deng, Z. Y. Wen, Y. Xiong and  L. F. Xi,
\emph{Bilipschitz embedding of self-similar sets,}
J. Anal. Math., \textbf{114} (2011), 63--97.

\bibitem{Fal-book2}  K. J.~Falconer, {\it Fractal Geometry: Mathematical
Foundations and Applications},   John Wiley \& Sons, New York,
 1990.


\bibitem{FaMa89} K. J.~Falconer and D. T.~Marsh, {\it Classification of
quasi-circles by Hausdorff dimension}, Nonlinearity, \textbf{2}
(1989), 489--493.


\bibitem{FaMa92} K. J.~Falconer and D. T.~Marsh, {\it On the Lipschitz equivalence of
Cantor sets}, Mathematika, \textbf{39} (1992), 223--233.

\bibitem{FRZ14} A. H. Fan, H. Rao and Y. Zhang,
\emph{Higher dimensional Frobenius problem: maximal saturated cone,
growth function and rigidity,} Preprint 2014.

\bibitem{Hun80} T. W.~Hungerford, {\it Algebra}, Graduate Texts in Mathematics
{\bf 73}, Springer, New York, 1980.

\bibitem{Hut81} J. E. Hutchinson,  {\it Fractals and self-similarity},
Indiana Univ. Math. J.,  {\bf 30}  (1981),  713--747.

\bibitem{Lau13} J. J. Luo and K. S. Lau,
{\it Lipschitz equivalence of self-similar sets and hyperbolic boundaries},
Adv. Math., \textbf{235} (2013), 555--579.

\bibitem{Llo09} M. Llorente and P. Mattila,
\emph{Lipschitz equivalence of subsets of self-conformal sets,}
Nonlinearity, \textbf{23} (2010), 875--882.

\bibitem{LLM} B. M. Li, W. X. Li and J. J.  Miao,
\emph{Lipschtiz  Equivalence of Mcmullen sets,}
Fractals, \textbf{21} (2013), 3--11.

\bibitem{MS08}
P. Mattila and P. Saaranen, {\it Ahlfors-David regular sets and
bilipschitz maps}, Ann. Acad. Sci. Fenn. Math., \textbf{34} (2009), 487--502.



\bibitem{RRW12}
H. Rao, H. J. Ruan, and Y. Wang,  {\it Lipschitz equivalence of Cantor sets and algebraic properties of contraction ratios},  Trans. Amer. Math. Soc.,  \textbf{364} (2012), 1109-1126.

\bibitem{RRW13} H. Rao, H. J. Ruan and Y. Wang, \emph{Lipschitz equivalence of self-similar sets: algebraic and geometric properites,}
Contemp. Math., \textbf{600} (2013). 


\bibitem{RRX06} H.~Rao, H. J.~Ruan and L. F.~Xi, {\it Lipschitz equivalence of self-similar
sets}, C. R. Acad. Math. Sci. Paris., \textbf{342} (2006),
191--196.

\bibitem{RRY08} H.~Rao, H. J.~Ruan and Y. M.~Yang, {\it Gap sequence, Lipschitz
equivalence and box dimension of fractal sets}, Nonlinearity,
\textbf{21} (2008), 1339--1347.


\bibitem{Ro10}  K. A. Roinestad, {\it Geometry of fractal squares},  Ph.D. Thesis, The Virginia Polytechnic Institute and State University (2010).

\bibitem{RWX12} H. J. Ruan, Y. Wang, L. F. Xi,
{\it Lipschitz equivalence of self-similar sets with touching structures},
 Nonlinearity,  \textbf{27} (2014), 1299--1321.


\bibitem{DengGT2} Z. X. Wen, Z. Y. Zhu and G. T. Deng, {\it Lipschitz equivalence of a class of general Sierpinski carpets,}
J. Math. Anal. Appl., \textbf{385} (2012), 16--23.



\bibitem{Xi09} L. F.~Xi, {\it Lipschitz equivalence of dust-like self-similar
sets},  Math. Z., \textbf{266} (2010), 683--691.

\bibitem{Xi04} L. F.~Xi, {\it Lipschitz equivalence of self-conformal sets}, J.
London Math. Soc.(2), \textbf{70} (2004), 369--382.


\bibitem{XiRu07} L. F.~Xi and H. J.~Ruan,  {\it Lipschitz equivalence of generalized
$\{1,3,5\}-\{1,4,5\}$ self-similar sets}, Sci. China Ser. A,
\textbf{50} (2007), 1537--1551.

\bibitem{XiRu08} L. F.~Xi and H. J.~Ruan, {\it Lipschitz equivalence of self-similar sets satisfying the strong separation
condition} (in Chinese), Acta Math. Sinica (Chin. Ser.), \textbf{51}
(2008), 493--500.


\bibitem{XiXi10} L. F. Xi and Y. Xiong, {\it Self-similar sets with initial cubic patterns},
 C. R. Math. Acad. Sci. Paris, \textbf{348} (2010),  15-20.

\bibitem{XiXi12}L.  F. Xi and  Y. Xiong, {\it Lipschitz equivalence of fractals generated by nested cubes},
 Math Z.,  \textbf{271} (2012), 1287--1308.

\bibitem{XiXi13} L. F. Xi and Y. Xiong,  \emph{Lipschitz Equivalence Class, Ideal Class and the Gauss Class Number Problem,}
Preprint 2013 (arXiv:1304.0103 [math.MG]).



\end{thebibliography}
\end{document}